\theoremstyle{plain}
\newtheorem{corollary}{\bf Corollary}
\newtheorem{lemma}{\bf Lemma}
\newtheorem{proposition}{\bf Proposition}
\newtheorem{remark}{Remark}
\newtheorem{theorem}{\bf Theorem}
\numberwithin{equation}{section}
\title[Rigidity of Serrin-type problems via integral identities]{Rigidity of Serrin-type problems via integral identities}
\author[ J. de Lima, M. S. Santos and J. S. Sindeaux]{ J. de Lima$^1$, M. S. Santos$^{\ast,1}$ and J. S. Sindeaux$^2$}
\address{$^1$ Departamento de Matem\'{a}tica, Universidade Federal da Para\'{\i}ba, 58.051-900 Jo\~{a}o Pessoa, Para\'{\i}ba, Brazil.}
\email{marcio.santos@academico.ufpb.br}
\email{jaqueline.lima@academico.ufpb.br}
\address{$^2$Universidade Regional do Cariri\\
 	63150-000 Campos Sales, Ceará, Brazil}
\email{joyce.sindeaux@urca.br}
\subjclass[2010]{Primary 53C42; Secondary 53B30 and 53C50.}
\thanks{$^\ast$ Corresponding author.}
\begin{document}

\begin{abstract}In this short note, we deal with Serrin-type problems in Riemannian manifolds. Firstly, we provide a Soap Bubble type theorem and rigidity results. In another direction,  we obtain a rigidity result addressed to annular regions in Einstein manifolds endowed with a conformal vector field.
\end{abstract}

\maketitle

\section{Introduction}

The classical Serrin overdetermined problem \cite{serrin} asserts that a solution exists to the boundary value problem 

\begin{eqnarray}
\Delta u  &=& -f(u) \ \ \hbox{in}\quad \Omega, \quad u=0\quad \hbox{on}\quad  \partial\Omega, \label{serrinok1}\\
u_{\nu}&=&-c\quad \hbox{on}\quad  \partial\Omega,\label{serrinok2}    
\end{eqnarray}
where $\Omega \subset \mathbb{R}^n$ is a bounded domain, $f$ is a smooth real function, $\nu$ denotes the outward unit normal vector on $\partial \Omega$, and $c>0$ is a constant,  
if and only if $\Omega$ is a ball and $u$ is a radial function. Serrin provided the complete solution to this problem, presenting remarkable motivation arising from fluid dynamics. In this work, Serrin used a technique known as the Method of Moving Planes. 



In \cite{kumpraj},  S. Kumaresan and J. Prajapat have considered the Serrin problem \eqref{serrinok1}-\eqref{serrinok2} on the hemisphere and hyperbolic space, more precisely the authors show that the solution $u$ is a radial function and $\Omega$ is a geodesic ball. 

Another technique to solve the Serrin problem was presented by Weinberger in \cite{wein}, when $f=1$. Weinberger's method involves defining a $P$-function associated with the solution to the problem \eqref{serrinok1}-\eqref{serrinok2} and applying classical maximum principles and the classical Pohozaev identity for domains contained in Euclidean manifolds. This method can be extended to Riemannian manifolds with some constraint on the Ricci tensor; however, this approach is only applicable to the particular case where $f(u)=1+nku$, see \cite{FRS, ciraolo,delay, FK, FR, Roncoroni} for recent contributions about this subject.

The classical strategies to approach the Serrin problem is related to the demonstration of the celebrated Aleksandrov's Soap Bubble Theorem, which claims the only compact embedded constant mean curvature hypersurface in the Euclidean space is a sphere. 

We recall that  the proof by Ros in \cite{ros} of Alexandrov's theorem via integral inequalities, one crucial step is the \emph{Heintze-Karcher inequality} that reads as follows: given an $n-$dimensional Riemannian manifold $(M,g)$ with non-negative Ricci curvature and given a bounded domain $\Omega\subset M$ such that the mean curvature $H$ of $\partial\Omega$ is positive, then  
\begin{equation}\label{HK_Ros}
 \frac{n-1}{n}\int_{\partial\Omega}\frac{1}{H}\geq Vol(\Omega).
\end{equation}
Moreover, the equality holds in \eqref{HK_Ros} if and only if $\Omega$ is isometric to the Euclidean ball.

Motivated by this, we provided the following Heintze-Karcher inequality

\begin{theorem}\label{1}
 Let $M$ be a manifold such that  $Ric\geq (n-1)kg$. Let  $u$ be a  solution of \eqref{serrinok1}, where $f'\leq nk$.
 If the mean curvature of $\partial\Omega$ is positive, then
 $$\frac{(n-1)}{n}f^2(0)\int_{\Omega}\frac{1}{H}\geq f(0)\int_{\Omega}f(u)$$ 
 Moreover, the equality holds if and only if $u$ is a radial function and $\Omega$ is a metric ball.
 \end{theorem}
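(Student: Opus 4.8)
We read the term $\int_{\Omega}1/H$ in the statement as $\int_{\partial\Omega}1/H$, where the mean curvature $H$ is defined. The plan is to run the Heintze--Karcher--Ros scheme of \cite{ros} with the solution $u$ itself in the role of the torsion function, feeding the hypotheses $\mathrm{Ric}\ge(n-1)kg$ and $f'\le nk$ into Reilly's identity. Throughout we use that $u>0$ in $\Omega$ and $f(0)>0$ (standard in Serrin-type problems), so that $u_\nu\le0$ on $\partial\Omega$ and $f(u)\equiv f(0)>0$ there. Since $u$ vanishes on $\partial\Omega$, Reilly's formula collapses to
\[
\int_\Omega\Big((\Delta u)^2-|\nabla^2u|^2-\mathrm{Ric}(\nabla u,\nabla u)\Big)=\int_{\partial\Omega}H\,u_\nu^2 .
\]

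First I would bound the left-hand side from above. Using the elementary inequality $|\nabla^2u|^2\ge\frac1n(\Delta u)^2$, the equation $\Delta u=-f(u)$, the curvature bound $\mathrm{Ric}(\nabla u,\nabla u)\ge(n-1)k|\nabla u|^2$, and then the integration by parts $\int_\Omega|\nabla u|^2=-\int_\Omega u\,\Delta u=\int_\Omega u\,f(u)$, one obtains
\[
\int_{\partial\Omega}H\,u_\nu^2\;\le\;\frac{n-1}{n}\int_\Omega\big(f(u)^2-nk\,u\,f(u)\big).
\]
Next I would exploit $f'\le nk$: the function $t\mapsto f(t)-nkt$ is nonincreasing, so $f(u)-nku\le f(0)$ on $\Omega$ because $u\ge0$; multiplying by $f(u)\ge0$ and integrating gives $\int_\Omega(f(u)^2-nk\,uf(u))\le f(0)\int_\Omega f(u)$, whence
\[
\int_{\partial\Omega}H\,u_\nu^2\;\le\;\frac{n-1}{n}\,f(0)\int_\Omega f(u).
\]

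The second half of the argument is the boundary Cauchy--Schwarz step of Ros. From $\int_{\partial\Omega}u_\nu=\int_\Omega\Delta u=-\int_\Omega f(u)$ and $H>0$ we get
\[
\int_\Omega f(u)=\int_{\partial\Omega}|u_\nu|=\int_{\partial\Omega}\frac{\sqrt H\,|u_\nu|}{\sqrt H}\;\le\;\Big(\int_{\partial\Omega}H\,u_\nu^2\Big)^{1/2}\Big(\int_{\partial\Omega}\tfrac1H\Big)^{1/2}.
\]
Squaring, inserting the previous display, and dividing by $\int_\Omega f(u)=\int_{\partial\Omega}|u_\nu|>0$ (positive by the Hopf lemma, since $\Delta u<0$ near $\partial\Omega$) yields $\int_\Omega f(u)\le\frac{n-1}{n}f(0)\int_{\partial\Omega}\tfrac1H$, and multiplying by $f(0)$ gives the asserted inequality.

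For the rigidity statement, equality forces every inequality above to be an equality. Equality in $|\nabla^2u|^2\ge\frac1n(\Delta u)^2$ gives the umbilical Hessian identity $\nabla^2u=-\frac{f(u)}{n}\,g$ on $\Omega$; equality in the monotonicity step gives $f(u)=f(0)+nku$; equality in the curvature bound gives $\mathrm{Ric}(\nabla u,\nabla u)=(n-1)k|\nabla u|^2$; and equality in the boundary Cauchy--Schwarz makes $H\,u_\nu^2$ constant on $\partial\Omega$. Contracting $\nabla^2u=-\frac{f(u)}{n}g$ with $\nabla u$ shows that $|\nabla u|^2+\frac2nF(u)$ is constant on $\Omega$, where $F'=f$ and $F(0)=0$; in particular $|\nabla u|$, hence $H$, is constant on $\partial\Omega$, and $u$ attains its maximum at an interior critical point. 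From the umbilical Hessian identity I would then argue, as in the rigidity discussion of the Heintze--Karcher inequality, that the level sets of $u$ are geodesic spheres about that point, that $\Omega$ is the geodesic ball they sweep out, and that $u$ depends only on the geodesic distance to the center; the converse is a direct computation on such a ball. I expect this final step --- promoting the pointwise identities to the global statement ``$\Omega$ is a metric ball and $u$ is radial'' --- to be the main obstacle, since it is precisely the delicate part of Ros's argument and must simultaneously absorb the rigidity forced by $\mathrm{Ric}(\nabla u,\nabla u)=(n-1)k|\nabla u|^2$.
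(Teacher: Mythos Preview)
Your overall strategy coincides with the paper's: feed the solution of \eqref{serrinok1} into Reilly's identity, use $f'\le nk$ and $\mathrm{Ric}\ge(n-1)kg$ to control the interior terms, and finish with a boundary quadratic step (your Cauchy--Schwarz is exactly the paper's completing-the-square, rewritten). The rigidity is also handled the same way: once $\mathring{\nabla}^2u=0$ and $f(u)=f(0)+nku$, the paper simply cites the Obata-type result in \cite{FR}, so the step you flag as ``the main obstacle'' is in fact routine here.

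There is, however, one step that does not go through as written. To pass from $\int_\Omega\big(f(u)^2-nk\,uf(u)\big)$ to $f(0)\int_\Omega f(u)$ you multiply the pointwise inequality $f(u)-nku\le f(0)$ by $f(u)$, and for this you need $f(u)\ge0$ on all of $\Omega$, not merely on $\partial\Omega$; nothing in the hypotheses ensures this (the assumptions $u>0$, $f(0)>0$, $f'\le nk$ do not prevent $f(u)$ from becoming negative in the interior). The paper sidesteps the issue by integrating $(\Delta u)^2=-\operatorname{div}(f(u)\nabla u)+f'(u)|\nabla u|^2$ directly, obtaining
\[
\int_\Omega(\Delta u)^2=-f(0)\int_{\partial\Omega}u_\nu+\int_\Omega f'(u)|\nabla u|^2,
\]
so that $f'\le nk$ alone already gives the intermediate bound $\int_{\partial\Omega}Hu_\nu^2\le\tfrac{n-1}{n}f(0)\int_\Omega f(u)$ with no sign condition on $f(u)$ or on $u$. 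With this replacement your argument is complete and matches the paper's.
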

 
Motivated by Magnanini and Poggesi, \cite{poggesi}, we present the following Soap Bubble type result as follows:

\begin{theorem} [Soap Bubble Theorem]\label{2}
Let $M$ be a manifold such that  $Ric\geq (n-1)kg$. Let  $u$ be a  solution of \eqref{serrinok1}, where $f'\leq nk$ and $f(0)>0,$ then
 \begin{equation*}
\int_{\partial\Omega}(H_{0}-H)(u_{\nu})^2\geq 0, 
 \end{equation*}
 where $H$ is the mean curvature of $\partial\Omega$ and $H_{0}=\frac{(n-1)f(0)}{n c}$ with $c$ a constant given by \eqref{choiceR}. In particular, if $H\geq H_{0}$ on $\partial\Omega$ then $\Omega$ is a ball (and, a fortiori, $u_{\nu}=-c$).
\end{theorem}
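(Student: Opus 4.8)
The plan is to carry out a $P$-function argument à la Weinberger, in the refined boundary-integral form developed by Magnanini and Poggesi, with the hypothesis $Ric\ge (n-1)kg$ together with $f'\le nk$ playing the role that Weinberger's sign conditions play in the Euclidean case. As a preliminary step I would note that, since $f(0)>0$, the strong maximum principle gives $u>0$ in $\Omega$, whence $u_\nu<0$ on $\partial\Omega$ by Hopf's boundary point lemma; integrating \eqref{serrinok1} over $\Omega$ then yields the basic identity $\int_\Omega f(u)=\int_{\partial\Omega}(-u_\nu)>0$. I would also fix the constant in \eqref{choiceR} as $c=\frac{1}{|\partial\Omega|}\int_{\partial\Omega}(-u_\nu)=\frac{1}{|\partial\Omega|}\int_\Omega f(u)$, so that $H_0=\frac{(n-1)f(0)}{nc}>0$.

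The core computation is the subharmonicity of $P:=|\nabla u|^2+\frac{2}{n}F(u)$, where $F(t)=\int_0^t f(s)\,ds$. Applying the Bochner formula to $|\nabla u|^2$ and using $\Delta u=-f(u)$ and $\Delta F(u)=f'(u)|\nabla u|^2-f(u)^2$, one gets $\frac12\Delta P=\bigl|\nabla^2u+\tfrac{f(u)}{n}g\bigr|^2+Ric(\nabla u,\nabla u)-\tfrac{n-1}{n}f'(u)|\nabla u|^2$. The hypotheses $Ric\ge (n-1)kg$ and $f'\le nk$ make the last two terms nonnegative in sum, so $\Delta P\ge 2\bigl|\nabla^2u+\tfrac{f(u)}{n}g\bigr|^2\ge 0$. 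This is the step where the hypotheses are consumed, and it is also where the rigidity is seeded, since $\Delta P\equiv0$ is equivalent to $\nabla^2u=-\tfrac{f(u)}{n}g$ on $\Omega$.

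Next I would push everything to the boundary. On $\partial\Omega$, where $u\equiv0$ so $\nabla u=u_\nu\nu$, one gets $P=u_\nu^2$ and $\partial_\nu P=2u_\nu\nabla^2u(\nu,\nu)+\tfrac{2}{n}f(0)u_\nu$; on the other hand the tangential–normal splitting of the Laplacian, together with $\Delta_{\partial\Omega}(u|_{\partial\Omega})=0$, gives $-f(0)=\Delta u=\nabla^2u(\nu,\nu)+Hu_\nu$ on $\partial\Omega$. Eliminating $\nabla^2u(\nu,\nu)$ produces the pointwise boundary identity $Hu_\nu^2=-\tfrac{n-1}{n}f(0)u_\nu-\tfrac12\partial_\nu P$. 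Integrating over $\partial\Omega$ and using the divergence theorem ($\int_{\partial\Omega}\partial_\nu P=\int_\Omega\Delta P\ge0$ and $\int_{\partial\Omega}u_\nu=\int_\Omega\Delta u=-\int_\Omega f(u)$) gives $\int_{\partial\Omega}Hu_\nu^2\le\tfrac{n-1}{n}f(0)\int_\Omega f(u)$. I expect this careful bookkeeping of signs and boundary terms, together with the concluding rigidity argument, to be the main technical obstacle.

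The proof then closes with Cauchy–Schwarz: $\bigl(\int_{\partial\Omega}(-u_\nu)\bigr)^2\le|\partial\Omega|\int_{\partial\Omega}u_\nu^2$, i.e.\ $c^2|\partial\Omega|\le\int_{\partial\Omega}u_\nu^2$. Since $\tfrac{n-1}{n}f(0)\int_\Omega f(u)=\tfrac{n-1}{n}f(0)\,c\,|\partial\Omega|=H_0\,c^2|\partial\Omega|\le H_0\int_{\partial\Omega}u_\nu^2$, combining with the previous inequality yields exactly $\int_{\partial\Omega}(H_0-H)u_\nu^2\ge0$. For the final assertion, if in addition $H\ge H_0$ on $\partial\Omega$, the integrand is $\le0$ while the integral is $\ge0$, so $H\equiv H_0$, every inequality above is an equality, $\Delta P\equiv0$ (hence $\nabla^2u=-\tfrac{f(u)}{n}g$ on $\Omega$), and equality in Cauchy–Schwarz forces $u_\nu\equiv-c$; one then concludes that $\Omega$ is a metric ball and $u$ is radial, either by invoking the equality case of Theorem~\ref{1} or directly from the umbilicity relation $\nabla^2u=-\tfrac{f(u)}{n}g$ (the level sets of $u$ being totally umbilical hypersurfaces with $|\nabla u|$ constant on each of them).
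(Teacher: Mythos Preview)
Your argument is correct and is, at the level of ideas, the same as the paper's: both are integrated Bochner/Reilly computations followed by a quadratic estimate on the boundary. The packaging differs slightly. The paper first records, via Reilly's identity, the integral inequality of Proposition~\ref{keylemma0},
\[
\int_{\Omega}|\mathring{\nabla}^2 u|^2+\int_{\Omega}[\mathrm{Ric}-(n-1)kg](\nabla u,\nabla u)\le-\frac{1}{n}\int_{\partial\Omega}u_{\nu}\big[(n-1)f(0)+nHu_{\nu}\big],
\]
and then completes the square on $\partial\Omega$ by writing $H_0\int u_\nu^2=\frac{(n-1)f(0)}{nc}\int(u_\nu+c)^2-2\frac{(n-1)f(0)}{n}\int u_\nu-\frac{(n-1)f(0)c}{n}|\partial\Omega|$, so that with the choice \eqref{choiceR} the affine terms cancel and the desired inequality drops out with the extra nonnegative term $\frac{(n-1)f(0)}{nc}\int(u_\nu+c)^2$. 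Your pointwise subharmonicity $\Delta P\ge 2|\mathring{\nabla}^2u|^2$ is exactly the differential form of Proposition~\ref{keylemma0}, your boundary identity $Hu_\nu^2=-\tfrac{n-1}{n}f(0)u_\nu-\tfrac12\partial_\nu P$ plays the role of the algebraic reorganization, and your Cauchy--Schwarz step $c^2|\partial\Omega|\le\int u_\nu^2$ is precisely the inequality $\int(u_\nu+c)^2\ge0$ used by the paper. The equality analysis is the same in both cases: one gets $\mathring{\nabla}^2u=0$ and $u_\nu\equiv -c$, and the conclusion that $\Omega$ is a ball follows from the Obata-type lemma in \cite{FR} (which is what the paper invokes; your phrase ``umbilicity relation'' is pointing at the same mechanism).

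One small remark: your preliminary claim that $u>0$ in $\Omega$ from $f(0)>0$ alone via the strong maximum principle is not entirely automatic for general $f$; but what you actually use downstream is only $c>0$ (so that $H_0>0$ and the Cauchy--Schwarz step preserves direction), and the paper itself takes this for granted as well.
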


In another direction, we consider a P-function related to the problem \eqref{serrinok1}, given by 
$$P(x)=|\nabla u|^2(x)+\frac{2}{n}\int_0^{u(x)}f(t)dt,$$ $x\in\Omega.$ This $P$ function plays an important role in the Serrin problem, in fact, this is a subharmonic function on $\Omega$, since that $f'\leq nk,$ see Lemma \ref{P}. 

Now, let us recall that a warped product $M=I\times_{h}N$ is a product manifold $I\times N$ endowed with a metric given by $g=dt^2+h^2(t)dM$, where $dM$ is a metric on $M$. Moreover, the vector field $X=h\partial_t$ is a closed conformal vector field with conformal factor $\varphi=h'$, that is,
$$\nabla_YX=\varphi Y,$$ for all $Y\in\mathfrak{X}(M).$ Motivated by Lee and Seo \cite{LSeo},  we are able to study the following Serrin problem

\begin{equation}\label{serrinSeointro}
\left\{\begin{array}{rcl}
\Delta u  &=& -f(u)\quad \hbox{in}\quad  \Omega\\
u&=& 0\quad \hbox{on}\quad  \partial\Omega\\
u_\nu&=& -\psi(r)\quad{on}\quad \partial\Omega,\\
\end{array}\right.
\end{equation}
where $\psi(r)$ is a positive increasing function on $[0,R]$, for $R=\max\lbrace \operatorname{dist}(p,x), x\in\partial\Omega\rbrace$, $p$ is a pole of $M$ and $\Omega\subset M.$ As an application of the maximum principle on the function $P$ above defined we obtain the following extension of the Theorem 2.1 due to Lee and Seo, \cite{LSeo}.

\begin{theorem}\label{3}
Let $M=I\times_{h}N$ be a warped product such that $\varphi=h'>0$ and $Ric\geq (n-1)k$. If $u$ is a solution of \eqref{serrinSeointro}, 
such that $f'\leq nk$ and $\frac{(n-1)f(0)}{n}\frac{h(R)}{h'(R)}\leq \psi(R)$ at $r=R$, then $\Omega$ is a geodesic ball and $u$ is a radial function.
\end{theorem}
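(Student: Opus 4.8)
The plan is to run the classical Weinberger-type argument in the warped product setting, using the subharmonicity of the $P$-function established in Lemma \ref{P}. First I would observe that since $\varphi = h' > 0$ and $\operatorname{Ric} \geq (n-1)k$, the function $P(x) = |\nabla u|^2(x) + \frac{2}{n}\int_0^{u(x)} f(t)\,dt$ is subharmonic on $\Omega$ because $f' \leq nk$, and hence by the maximum principle $P$ attains its maximum on $\partial\Omega$. On $\partial\Omega$ we have $u = 0$, so the integral term vanishes and $P = |\nabla u|^2 = (u_\nu)^2 = \psi(r)^2$. Since $\psi$ is increasing on $[0,R]$, the boundary maximum of $P$ is achieved exactly at the points of $\partial\Omega$ farthest from the pole $p$, i.e. where $r = R$, and there $P = \psi(R)^2$.

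Next I would extract a pointwise gradient bound from this. The key comparison is the Heintze–Karcher-type inequality or, more directly, an integral identity (Pohozaev-type) relating $\int_\Omega f(u)$ to the boundary data; combined with the divergence theorem applied to $\nabla\left(\frac{1}{2}|\nabla u|^2\right)$ and the Bochner formula, one gets that equality in the relevant inequality forces $\nabla^2 u = \frac{\Delta u}{n} g$, i.e. $u$ has umbilic level sets and $|\nabla u|$ depends only on the distance to the pole. The hypothesis $\frac{(n-1)f(0)}{n}\frac{h(R)}{h'(R)} \leq \psi(R)$ is precisely the threshold that, via the Heintze–Karcher inequality of Theorem \ref{1} (applied with the geometry of the warped product, where the mean curvature of the geodesic sphere of radius $r$ is $(n-1)h'(r)/h(r)$), saturates the chain of inequalities: the boundary condition at $r=R$ gives $H_0 \leq H$ in the notation of Theorem \ref{2}, forcing equality throughout.

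More concretely, I would argue as follows. Apply Theorem \ref{1}: $\frac{n-1}{n}f^2(0)\int_\Omega \frac{1}{H} \geq f(0)\int_\Omega f(u)$. On the other hand, integrating the equation and using the divergence theorem, $\int_\Omega f(u) = -\int_\Omega \Delta u = \int_{\partial\Omega} u_\nu \cdot(-1)\,dS$... rather $= \int_{\partial\Omega}\psi(r)\,dS$. One then needs a lower bound for $\int_{\partial\Omega}\psi(r)\,dS$ in terms of $\int_\Omega \frac{1}{H}$; using that $\psi$ is increasing and the warped-product identity for $H$ on geodesic spheres, together with the assumed inequality at $r=R$, one shows the two estimates can only be compatible if equality holds in the Heintze–Karcher inequality of Theorem \ref{1}. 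By the equality case of that theorem, $\Omega$ is a metric ball and $u$ is radial, which is the conclusion.

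The main obstacle I anticipate is the step linking the boundary integral $\int_{\partial\Omega}\psi(r)\,dS$ to the volume-type quantity $\int_\Omega \frac{1}{H}$ in a way that genuinely uses the monotonicity of $\psi$ and the warped-product structure; naively $\psi(r)$ is not constant on $\partial\Omega$, so one cannot directly factor it out, and one must compare $\partial\Omega$ with the geodesic sphere of radius $R$ using that $h'/h$ controls the mean curvature and that $\varphi = h' > 0$ guarantees the needed convexity/monotonicity of the comparison geometry. Handling the case where $\partial\Omega$ is not connected, or where the maximum of $P$ is attained at interior points (ruled out by the strong maximum principle unless $P$ is constant, which again yields the rigidity directly), will require the usual care, but these are standard once the main inequality is in place.
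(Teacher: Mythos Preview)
Your opening paragraph is correct and matches the paper: $P$ is subharmonic by Lemma~\ref{P}, its boundary values equal $\psi(r)^2$, and the boundary maximum is $\psi(R)^2$, attained at a point $q\in\partial\Omega$ with $r(q)=R$. After that, however, your strategy diverges from the paper's and runs into a genuine gap.

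The paper does \emph{not} use the integral Heintze--Karcher inequality (Theorem~\ref{1}) or the Soap Bubble result (Theorem~\ref{2}) here. Instead it argues by contradiction, pointwise at the single point $q$. If $P$ is not constant, the Hopf boundary point lemma gives $P_\nu(q)>0$. One then computes $P_\nu$ explicitly on $\partial\Omega$: since $\nabla P = 2\nabla^2 u(\nabla u,\cdot)+\tfrac{2}{n}f(u)\nabla u$ and, on the boundary, $\nabla^2 u(\nu,\nu)=\Delta u - H u_\nu = -f(0)+H\psi(r)$, one finds
\[
P_\nu(q) \;=\; -2\psi(R)\bigl(-f(0)+H(q)\psi(R)\bigr)-\tfrac{2}{n}f(0)\psi(R),
\]
so $P_\nu(q)>0$ forces $H(q)<\dfrac{(n-1)f(0)}{n\,\psi(R)}$. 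On the other hand, since $q$ realizes the maximal distance $R$ from the pole, the geodesic sphere $\partial B_R(p)$ is tangent to $\partial\Omega$ at $q$ from the outside, and the tangency (comparison) principle gives $H(q)\ge h'(R)/h(R)$. Combining the two bounds yields $\psi(R)<\dfrac{(n-1)f(0)}{n}\,\dfrac{h(R)}{h'(R)}$, contradicting the hypothesis. Hence $P$ is constant, and the equality case of Lemma~\ref{P} finishes the proof.

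Your proposed route via $\int_{\partial\Omega}\psi(r)\,dS$ versus $\int_{\partial\Omega}\tfrac{1}{H}$ is exactly where you yourself flag the obstacle, and it is a real one: since $\psi(r)$ is not constant on $\partial\Omega$, monotonicity of $\psi$ alone does not produce the needed comparison with the Heintze--Karcher integral, and Theorems~\ref{1}--\ref{2} give no handle on a variable Neumann datum. The two ingredients you are missing are (i) the Hopf lemma to get a strict sign for $P_\nu$ at $q$, and (ii) the tangency principle between $\partial\Omega$ and the geodesic sphere of radius $R$ in the warped product, which converts the geometric fact $r(q)=R$ into the mean-curvature lower bound $H(q)\ge h'(R)/h(R)$. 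With those in hand the argument is entirely pointwise and the integral machinery is unnecessary.
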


We also consider a Serrin-type problem in annular regions $\Omega=\Omega_0\setminus\Omega_1,$ 
where  $\Omega_0$ and $\Omega_1$ are bounded domains such that $\Omega_1\subset\Omega_0$, contained in manifolds endowed with a conformal vector field as follows
\begin{equation}\label{serrinconstintro}
    \begin{cases}
        \Delta u +nku = -n& \hbox{in}\quad  \Omega\\
u= a,\quad u_\nu=c_1  &  \hbox{on} \quad  \Gamma_1\\
u= 0,\quad u_\nu=c_0 & \hbox{on}\quad  \Gamma_0,
    \end{cases}
\end{equation}
where $k\in\mathbb{R}$,  $\partial\Omega_0=\Gamma_0,$ $\partial\Omega_1=\Gamma_1$ and $\partial\Omega=\Gamma_1\cup\Gamma_0.$

In the Euclidean manifold, Reichel \cite{Reichel} shows that $\Omega$ is a standard annulus since that $0\leq u\leq a.$ The main tool used in the proof is the moving planes method. After, Sirakov \cite{sirakov} removed the extra condition on the solution $u.$ Recently, Lee and Seo \cite{LSeo}, provided some rigidity results in space forms by using the P-function method.

Now, we are in a position to state our contribution addressed to annular regions in Einstein manifolds endowed with a conformal vector field as follows.

\begin{theorem}\label{4}
Let $M$ be an Einstein manifold with $Ric=(n-1)kg$ and endowed with a closed conformal vector field $X.$ Suppose that $u$ is a solution of the problem \eqref{serrinconstintro} such that $c_1 H_1\geq -(ka+1)(n-1)$ on $\Gamma_1$ and $c_0 H_0\leq -(n-1)$ on $\Gamma_0$. If $\partial\Omega$ is star-shaped manifold, then $\partial\Omega$ is an umbilical hypersurface.
\end{theorem}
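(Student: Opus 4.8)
The plan is to test the equation with the Reilly formula on the annular domain $\Omega$, use that $u$ is locally constant on $\partial\Omega=\Gamma_0\cup\Gamma_1$ to discard the tangential boundary terms, and then force the resulting inequality into its own equality case — which says exactly that $\nabla^2u$ is pointwise proportional to the metric — by feeding in the overdetermined data together with the star-shapedness hypothesis. Umbilicity of $\partial\Omega$ follows at once from that equality case.

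First I would apply Reilly's formula to $u$ on $\Omega$. Since $u\equiv 0$ on $\Gamma_0$ and $u\equiv a$ on $\Gamma_1$, the tangential gradient of $u$ vanishes on $\partial\Omega$, so the formula collapses to
$$\int_{\Omega}\Big[(\Delta u)^2-|\nabla^2u|^2-Ric(\nabla u,\nabla u)\Big]\,dV=\int_{\Gamma_0}H_0\,c_0^2\,dA+\int_{\Gamma_1}H_1\,c_1^2\,dA,$$
with $H_0,H_1$ the mean curvatures taken with respect to the outward unit normal of $\Omega$. Inserting $\Delta u=-n(ku+1)$, the Einstein identity $Ric(\nabla u,\nabla u)=(n-1)k|\nabla u|^2$, and the pointwise Cauchy--Schwarz bound $|\nabla^2u|^2\ge\tfrac1n(\Delta u)^2=n(ku+1)^2$ (with equality precisely when $\nabla^2u=\tfrac{\Delta u}{n}g=-(ku+1)g$), I get
$$\int_{\Gamma_0}H_0c_0^2+\int_{\Gamma_1}H_1c_1^2\ \le\ (n-1)\int_{\Omega}\big[n(ku+1)^2-k|\nabla u|^2\big]\,dV.$$

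The second step is to convert the right-hand volume integral into boundary data using the equation again: integrating by parts, $n\int_\Omega(ku+1)^2=-\int_\Omega(ku+1)\Delta u=k\int_\Omega|\nabla u|^2-\int_{\partial\Omega}(ku+1)u_\nu$, so the bracket integrates to $-\int_{\partial\Omega}(ku+1)u_\nu=-\int_{\Gamma_0}c_0-(ka+1)\int_{\Gamma_1}c_1$. Rearranging yields the master inequality
$$\int_{\Gamma_0}c_0\big(H_0c_0+(n-1)\big)+\int_{\Gamma_1}c_1\big(H_1c_1+(n-1)(ka+1)\big)\ \le\ 0,$$
and its left-hand side in fact equals $-\int_\Omega\big(|\nabla^2u|^2-\tfrac1n(\Delta u)^2\big)\,dV$, so equality in the master inequality is \emph{equivalent} to $\nabla^2u=-(ku+1)g$ on all of $\Omega$.

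The hypotheses now read exactly as $H_0c_0+(n-1)\le 0$ on $\Gamma_0$ and $H_1c_1+(n-1)(ka+1)\ge 0$ on $\Gamma_1$, so it remains to check that $c_0\le 0$ and $c_1\ge 0$, which is where star-shapedness of $\partial\Omega$ with respect to the closed conformal field $X$ comes in. I would read off these signs from the Minkowski-type identity $\int_{\Gamma_i}\big(H_i\langle X,\nu\rangle-(n-1)\varphi\big)=0$ together with the fixed sign of $\langle X,\nu\rangle$ on each component — complementing this, if needed, with the subharmonicity of the $P$-function, which rules out interior critical points of $u$ and thus forces $0\le u\le a$ and, via the Hopf lemma, the asserted signs of $c_0,c_1$. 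Granting that, both integrands in the master inequality are nonnegative, the inequality becomes an equality, and hence $\nabla^2u=-(ku+1)g$ throughout $\Omega$; then on any regular level set of $u$ the second fundamental form equals $-\tfrac{ku+1}{|\nabla u|}g$ restricted to the level set, i.e. a multiple of the induced metric, and since the hypotheses force $c_0\neq 0$ and $c_1\neq 0$, the components $\Gamma_0,\Gamma_1$ are regular level sets, so $\partial\Omega$ is umbilical. The step I expect to be the real obstacle is exactly this last one — coaxing the correct signs of $c_0$ and $c_1$ out of the star-shapedness hypothesis so that the two boundary integrals genuinely have the sign needed to close the argument; Steps one and two are routine Reilly-plus-integration-by-parts bookkeeping.
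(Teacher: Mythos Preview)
Your Reilly computation (Steps 1 and 2) is correct and reproduces exactly the paper's Theorem~\ref{keylemmaanelar} in the Einstein case: with $Ric=(n-1)kg$ one has the identity
\[
\int_\Omega|\mathring\nabla^2 u|^2
=-\int_{\Gamma_0}c_0\bigl(H_0c_0+(n-1)\bigr)-\int_{\Gamma_1}c_1\bigl(H_1c_1+(n-1)(ka+1)\bigr),
\]
and equality with zero is equivalent to $\nabla^2u=-(ku+1)g$.

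The gap is exactly where you flagged it: you cannot extract the signs of $c_0$ and $c_1$ from the hypotheses, and your two proposed remedies do not work. The single-component Minkowski identity $\int_{\Gamma_i}\!\bigl(H_i\langle X,\nu\rangle-(n-1)\varphi\bigr)=0$ constrains $H_i$, not $u_\nu$; and the $P$-function argument does not force $0\le u\le a$ --- the paper's own Remark after Corollary~\ref{sphere} exhibits admissible data on $\mathbb S^n$ with $a<-1$ and $c_1<0$, which kills the claim $c_1\ge 0$ outright. So as written your master inequality can have both boundary integrands nonpositive, and no equality is forced.

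The paper closes this gap by reversing the order of the two tools. It first proves a \emph{different} Minkowski-type identity (Proposition~\ref{keylemma}) in which the boundary weight is $\langle X,\nu\rangle$ rather than $u_\nu$:
\[
\int_{\Gamma_0}\Bigl(\tfrac{c_0H_0}{n-1}+1\Bigr)\langle X,\nu\rangle
=-\int_{\Gamma_1}\Bigl(\tfrac{c_1H_1}{n-1}+ka+1\Bigr)\langle X,\nu\rangle.
\]
Star-shapedness gives $\langle X,\nu\rangle>0$ on $\Gamma_0$ and $\langle X,\nu\rangle<0$ on $\Gamma_1$, so the hypotheses make the left side $\le 0$ and the right side $\ge 0$; hence both vanish and one gets the \emph{pointwise} equalities $H_0c_0+(n-1)=0$ and $H_1c_1+(n-1)(ka+1)=0$. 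Only then is the Reilly identity invoked: with these equalities the right-hand side above is identically zero, so $\mathring\nabla^2u=0$ and umbilicity follows. The point is that star-shapedness controls the sign of $\langle X,\nu\rangle$, not of $u_\nu$, so the identity that carries the argument must be weighted by $\langle X,\nu\rangle$; this is precisely what Proposition~\ref{keylemma} supplies and what your sketch is missing.
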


\textbf{Overview of the Paper} In Section 2, we provide a proof of the Heintze-Karcher inequality and the Soap Bubble-type result. In section 3, we provide a rigidity result, by using a new P-function on manifolds, see Theorem \ref{torsion}, and we give a proof of Theorem \ref{3}. In Section 4, the focus is the study of the Serrin problem in annular domains contained in Einstein manifolds.

\section{Soap Bubble type theorem}
Let us consider the following Serrin-type problem on bounded domains $\Omega$ contained in a Riemannian manifold $M$ as follows.
\begin{equation}\label{serrinnoover}
\left\{\begin{array}{rcl}
\Delta u  &=& -f(u)\quad \hbox{in}\quad  \Omega\\
u&=& 0\quad \hbox{on}\quad  \partial\Omega,\\
\end{array}\right.
\end{equation}
where $f:M\rightarrow\mathbb{R}$ is a smooth function. We begin this section remembering the well-known \emph{Reilly identity}

$$\int_{\Omega}^{} \Big[ \frac{n-1}{n}(\Delta \rho)^2 - | \mathring{\nabla}^2\rho |^2 \Big] \; \label{reilly}\\      = \int_{\partial \Omega}^{} \Big( h(\bar{\nabla} z, \bar{\nabla} z) + 2 \rho_{\nu} \bar{\Delta} z + H \rho_{\nu}^2\Big)  + \int_{\Omega}^{} Ric(\nabla \rho, \nabla \rho) ,
 $$   
which holds true for every domain $\Omega$ in a Riemannian manifold $(M^n,g)$ and for every $\rho \in C^{\infty}(\overline{\Omega})$, where $\mathring{\nabla}^2 \rho$ denotes the traceless Hessian of $\rho$, explicitly
$$
\mathring{\nabla}^2 \rho=\nabla^2 \rho - \frac{\Delta \rho}{n} g\, ,
$$
$\bar{\nabla}$ and $\bar{\Delta}$ indicate the gradient and the Laplacian of the induced metric in $\partial\Omega$, $z=\rho\vert_{\partial\Omega}$ 
and $\nu$ be the unit outward normal of $\partial\Omega$, $h(X,Y)= g(\nabla_{X}\nu, Y)$ and $H=tr_{g}h$ the second fundamental form and the mean curvature (with respect to $\nu$) of $\partial\Omega$, respectively.

 Now, we can state and prove our first integral inequality as follows.

 \begin{proposition}\label{keylemma0}
 Let  $u$ be a  solution of \eqref{serrinnoover} such that $f'\leq nk$, then
 \begin{equation}\label{serrin3}  \int_{\Omega}{| \mathring{\nabla}^2 u}|^2+\int_{\Omega}[\mbox{Ric}-(n-1)kg](\nabla u,\nabla u)\leq-\frac{1}{n}\int_{\partial\Omega}u_{\nu}[(n-1)f(0)+nHu_{\nu}].
\end{equation}
The equality holds if and only if $f(u)=f(0)+nku$.

 \end{proposition}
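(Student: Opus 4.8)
The plan is to apply the Reilly identity to $\rho=u$ and exploit the Dirichlet condition $u=0$ on $\partial\Omega$. Since $z=u|_{\partial\Omega}\equiv 0$, both $\bar\nabla z$ and $\bar\Delta z$ vanish identically, so the entire boundary integral in the Reilly identity collapses to $\int_{\partial\Omega}H u_\nu^2$. Moreover, on $\partial\Omega$ we have $\nabla u = u_\nu\,\nu$, hence $|\nabla u|^2 = u_\nu^2$ on the boundary. Using $\Delta u = -f(u)$ inside $\Omega$, the leading term becomes $\frac{n-1}{n}\int_\Omega f(u)^2$. So the Reilly identity reads
\begin{equation*}
\frac{n-1}{n}\int_{\Omega} f(u)^2 - \int_{\Omega}|\mathring\nabla^2 u|^2 = \int_{\partial\Omega} H u_\nu^2 + \int_{\Omega}\mathrm{Ric}(\nabla u,\nabla u).
\end{equation*}

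The second step is to handle the term $\frac{n-1}{n}\int_\Omega f(u)^2$, which is not yet in the desired form — we need the factor $(n-1)f(0)u_\nu$ on the boundary. The natural device is an integration by parts: consider $\int_\Omega \Delta u \cdot f(u) = -\int_\Omega f(u)^2$, but more usefully integrate the quantity $\mathrm{div}(f(0)\nabla u)$ or work with $\int_\Omega \nabla u \cdot \nabla(\text{something})$. Concretely, I would write $\int_\Omega f(u)^2 = -\int_\Omega f(u)\Delta u$ and then, to produce the boundary term, integrate by parts once more: $-\int_\Omega f(u)\Delta u = \int_\Omega f'(u)|\nabla u|^2 - \int_{\partial\Omega} f(u) u_\nu = \int_\Omega f'(u)|\nabla u|^2 - f(0)\int_{\partial\Omega} u_\nu$ (using $u=0$ on $\partial\Omega$, so $f(u)=f(0)$ there). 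Now the hypothesis $f'\le nk$ enters: $\int_\Omega f'(u)|\nabla u|^2 \le nk\int_\Omega |\nabla u|^2$, and one more integration by parts gives $\int_\Omega |\nabla u|^2 = -\int_\Omega u\,\Delta u = \int_\Omega u f(u)$... but a cleaner route is $nk\int_\Omega|\nabla u|^2 = -nk\int_\Omega u\Delta u = nk\int_\Omega u f(u)$; comparing, one sees that the curvature hypothesis $\mathrm{Ric}\ge(n-1)kg$ should be used to absorb exactly the $nk|\nabla u|^2$ contribution via the $\mathrm{Ric}(\nabla u,\nabla u)$ term already present in Reilly. So I would instead split $\mathrm{Ric}(\nabla u,\nabla u) = [\mathrm{Ric}-(n-1)kg](\nabla u,\nabla u) + (n-1)k|\nabla u|^2$, move the traceless-Hessian and modified-Ricci terms to the left, and arrange the remaining scalar terms $\frac{n-1}{n}\int_\Omega f(u)^2$, $-(n-1)k\int_\Omega|\nabla u|^2$, and $\int_\Omega f'(u)|\nabla u|^2$-type contributions so that the bound $f'\le nk$ yields the claimed inequality with the boundary term $-\frac1n\int_{\partial\Omega}u_\nu[(n-1)f(0)+nHu_\nu]$.

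The main obstacle — and the bookkeeping that needs care — is tracking the algebra so that all the $|\nabla u|^2$ and $f(u)^2$ volume terms cancel against each other and against $(n-1)k|\nabla u|^2$, leaving precisely the modified-Ricci term plus the traceless Hessian on the left and only boundary data on the right; in particular one must verify that the coefficient $\frac{n-1}{n}$ on $f(0)u_\nu$ emerges correctly. For the equality case, equality in $\int_\Omega f'(u)|\nabla u|^2 \le nk\int_\Omega|\nabla u|^2$ forces $f'(u)=nk$ wherever $\nabla u\ne 0$; since $u$ is a nonconstant solution, $\{\nabla u\ne 0\}$ is dense, so $f'\equiv nk$ along the range of $u$, i.e. $f(u)=f(0)+nku$ after integrating and using $f(0)$ as the constant of integration. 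Conversely, if $f(u)=f(0)+nku$ then $f'=nk$ everywhere and the inequality step is an equality, so the proposition's inequality becomes an identity. I would present the forward direction by carefully assembling the three integrations by parts, then read off both the inequality and the equality characterization from the single place where an inequality was used.
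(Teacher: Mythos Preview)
Your proposal is correct and follows essentially the same route as the paper: Reilly's identity with $z=u|_{\partial\Omega}=0$ to kill the tangential boundary terms, the integration by parts $\int_\Omega(\Delta u)^2=-f(0)\int_{\partial\Omega}u_\nu+\int_\Omega f'(u)|\nabla u|^2$, and then the single inequality $f'(u)\le nk$ to produce the $(n-1)k|\nabla u|^2$ term that combines with $\mathrm{Ric}(\nabla u,\nabla u)$ into $[\mathrm{Ric}-(n-1)kg](\nabla u,\nabla u)$. The detour you sketch (integrating $|\nabla u|^2$ by parts once more) is unnecessary, and you correctly abandon it; once you substitute the expression for $\int_\Omega f(u)^2$ and split the Ricci term, the algebra closes immediately with the right coefficient $\frac{n-1}{n}$ on $f(0)u_\nu$.

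For the equality case the paper gives a slightly slicker argument than your density reasoning: from $(nk-f'(u))|\nabla u|^2=0$ pointwise one gets $(nk-f'(u))^2|\nabla u|^2=0$, i.e.\ $|\nabla(nku-f(u))|^2=0$, so $nku-f(u)$ is constant on $\Omega$ and the boundary condition fixes that constant as $-f(0)$. This avoids invoking unique continuation to assert that $\{\nabla u\neq 0\}$ is dense.
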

 \begin{proof}
 Reilly's identity applied to the solution of \eqref{serrinnoover} implies
\begin{equation*}
 \int_{\Omega}|\mathring{\nabla}^2 u|^2=-\int_{\partial\Omega}Hu_{\nu}^2-\int_{\Omega}\mbox{Ric}(\nabla u,\nabla u)+\frac{n-1}{n}\int_{\Omega}(\Delta u)^2.   
\end{equation*}
 
Note that 
$$(\Delta u)^2=-f(u)\operatorname{div}(\nabla u)=-\operatorname{div}(f(u)\nabla u)+f'(u)|\nabla u|^2.$$

Thus,
$$\int_{\Omega}(\Delta u)^2=-\int_{\partial\Omega}f(0)u_\nu+\int_{\Omega}f'(u)|\nabla u|^2.$$

Thus, from \eqref{serrinnoover} and above equation we get that
$$\int_{\Omega}|\mathring{\nabla}^2 u|^2=-\int_{\partial\Omega}Hu_{\nu}^2-\int_{\Omega}\mbox{Ric}(\nabla u,\nabla u)-\frac{n-1}{n}\int_{\partial\Omega}f(0)u_\nu+\frac{n-1}{n}\int_{\Omega}f'(u)|\nabla u|^2.$$
Taking into account that $f'\leq nk$ we get the desired inequality.

Now, supposing that the equality holds we have that $f'(u)|\nabla u|^2=nk|\nabla u|^2$ and, therefore,
$$|\nabla(-f(u)+nku)|^2=(-f'(u)+nk)^2|\nabla u|^2=0.$$
Since $u=0$ along the boundary, we have that $f(u)=f(0)+nku.$ 

\end{proof}

As a consequence of the above result, we get the following.

\begin{corollary}
Let $M$ be a manifold such that $Ric\geq (n-1)kg$. Let $u$ be a solution of \eqref{serrinnoover}, where $f'\leq nk$. If $u_\nu=-\frac{(n-1)f(0)}{nH}$, then $u$ is radial and $\Omega$ is a metric ball.
\end{corollary}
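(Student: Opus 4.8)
The plan is to combine Proposition \ref{keylemma0} with the hypothesis $Ric \geq (n-1)kg$ and the overdetermined boundary condition $u_\nu = -\frac{(n-1)f(0)}{nH}$ to force the equality case in \eqref{serrin3}. First I would observe that, under the Ricci curvature assumption, the term $\int_\Omega [\mathrm{Ric} - (n-1)kg](\nabla u, \nabla u)$ on the left-hand side of \eqref{serrin3} is non-negative, and $\int_\Omega |\mathring{\nabla}^2 u|^2 \geq 0$ trivially; hence the left-hand side of \eqref{serrin3} is $\geq 0$. The goal is therefore to show the right-hand side vanishes (or is non-positive), which pins everything down.

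Next I would evaluate the boundary term on the right of \eqref{serrin3} using $u_\nu = -\frac{(n-1)f(0)}{nH}$. The bracket becomes
\[
(n-1)f(0) + nH u_\nu = (n-1)f(0) + nH\left(-\frac{(n-1)f(0)}{nH}\right) = 0,
\]
so the entire right-hand side of \eqref{serrin3} is identically zero. Combined with the previous paragraph, this squeezes both non-negative terms on the left to zero: $\mathring{\nabla}^2 u \equiv 0$ on $\Omega$ and $[\mathrm{Ric} - (n-1)kg](\nabla u, \nabla u) \equiv 0$.

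From $\mathring{\nabla}^2 u \equiv 0$ we get $\nabla^2 u = \frac{\Delta u}{n} g = -\frac{f(u)}{n} g$, i.e. $u$ has umbilical (totally geodesic-type) Hessian proportional to the metric. This is precisely the condition characterizing a distance-like function on a model space; the standard argument (flow along $\nabla u / |\nabla u|$, or invoking the Obata-type / warped-product splitting that $\nabla u$ generates a closed conformal field) yields that $\Omega$ is a geodesic ball and $u$ is radial. I would also note that the equality statement in Proposition \ref{keylemma0} additionally gives $f(u) = f(0) + nku$, consistent with the model solution. Since $u_\nu = -\frac{(n-1)f(0)}{nH}$ is, a posteriori, constant along $\partial\Omega$, this is exactly Serrin's overdetermined configuration, and $\Omega$ being a ball with radial $u$ follows.

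The main obstacle is the last step — deducing the rigidity (ball + radial $u$) from $\mathring{\nabla}^2 u \equiv 0$ in the Riemannian setting rather than Euclidean. In $\mathbb{R}^n$ a function with $\nabla^2 u = (\text{scalar})\,g$ is immediately radial, but on a general manifold with only $Ric \geq (n-1)kg$ one needs the extra equality $[\mathrm{Ric} - (n-1)kg](\nabla u,\nabla u)=0$ together with the traceless-Hessian vanishing to run an Obata-type rigidity argument and recover that $M$ restricted to $\Omega$ is (isometric to) the model ball. I expect the paper handles this either by citing the analogous equality discussion in Theorem \ref{1} or by a direct integration of the Hessian equation along geodesics emanating from the critical point of $u$.
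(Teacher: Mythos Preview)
Your proposal is correct and follows essentially the same route as the paper: plug the boundary condition into the right-hand side of \eqref{serrin3} to make it vanish, use $Ric\geq (n-1)kg$ to force $\mathring{\nabla}^2 u=0$, invoke the equality case of Proposition~\ref{keylemma0} to obtain $f(u)=f(0)+nku$ and hence $\nabla^2 u=-(\tfrac{f(0)}{n}+ku)g$, and conclude via an Obata-type rigidity result. The paper resolves your ``main obstacle'' exactly as you anticipated, by citing the Obata-type lemma in \cite[Lemma~6]{FR}.
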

\begin{proof}
    Indeed, from our hypothesis and Proposition \ref{keylemma0} we get that $|\mathring{\nabla}^2 u|^2=0.$
    Since $f(u)=-f(0)-nku$  we can guarantee that $\nabla^2u=-(\frac{f(0)}{n}+ku)g.$ Finally from a slightly modification of Obata-type theorem in \cite[Lemma 6] {FR} , we can conclude that $u$ is radial and $\Omega$ is a metric ball.
\end{proof}

 \begin{proof}[Proof of Theorem \ref{1}]
 
 Since
 \begin{eqnarray*}
     \frac{1}{nH}[(n-1)f(0)+nHu_{\nu}]^2&=&\frac{(n-1)^2f^2(0)}{nH}+u_{\nu}[(n-1)f(0)+nHu_{\nu}]+u_{\nu}(n-1)f(0),
 \end{eqnarray*}   
we have
\begin{eqnarray*}
-\frac{1}{n}\int_{\partial \Omega}u_{\nu}[(n-1)f(0)+nHu_{\nu}]&=&-\int_{\partial\Omega}\frac{1}{n^2 H}[(n-1)f(0)+nHu_{\nu}]^2\\
& &+\left(\frac{(n-1)f(0)}{n}\right)^2\int_{\partial\Omega}\frac{1}{H}+\frac{(n-1)f(0)}{n}\int_{\partial\Omega}u_{\nu}.    
\end{eqnarray*}
On the other hand,
$$\int_{\partial\Omega}u_{\nu}=-\int_{\Omega}f(u).$$
Then, from above equalities and by \eqref{serrin3} we conclude the desired result. If the equality holds, then  $\mathring{\nabla}^2 u=0$ and, therefore, from \cite{FR}, $\Omega$ is a metric ball and $u$ is a radial function.
 
 \end{proof}

Now, let us provide a proof of our Soap Bubble theorem as follows.
 \begin{proof}[Proof of Theorem \ref{2}]


Firstly, note that
\begin{eqnarray*}
\int_{\partial\Omega}Hu_{\nu}^2&=&H_{0}\int_{\partial\Omega}u_{\nu}^2+\int_{\partial\Omega}(H-H_{0})u_{\nu}^2\\
 &=&\frac{(n-1)f(0)}{nc}\int_{\partial\Omega}\left(u_{\nu}+c\right)^2-2\frac{(n-1)f(0)}{n}\int_{\partial\Omega}u_{\nu}-\frac{(n-1)f(0)c}{n}|\partial\Omega|+\int_{\partial\Omega}(H-H_{0})u_{\nu}^2  .
\end{eqnarray*}

  From above equation and \eqref{serrin3} we conclude that
\begin{eqnarray}
& & -\frac{1}{n}\int_{\partial \Omega}u_{\nu}[(n-1)f(0)+nHu_{\nu}]=\nonumber\\
&=&\frac{(n-1)f(0)}{n}\int_{\partial\Omega}u_{\nu}+\frac{(n-1)f(0)c}{n}|\partial\Omega|-\frac{(n-1)f(0)}{nc}\int_{\partial\Omega}\left(u_{\nu}+c\right)^2+\int_{\partial\Omega}(H_{0}-H)u_{\nu}^2\geq 0.\label{reilly3}   
\end{eqnarray}

Now, taking
\begin{equation}
c=-\frac{\int_{\partial\Omega}u_{\nu}}{|\partial\Omega|}.\label{choiceR}
\end{equation}
from \eqref{reilly3} and  \eqref{serrin3} we obtain the desired inequality. Moreover, if $H\geq H_{0}$, the equality holds and, reasoning like the anterior results, we conclude that  $\Omega$ is a metric ball and $u$ is a radial function.


\end{proof}

 \section{Some rigidity result via P-function }

Given a solution to the Serrin problem \eqref{serrinnoover}, let us consider the following $P$-function given by 
$$P(x)=|\nabla u|^2(x)+\frac{2}{n}\int_0^{u(x)}f(t)dt,$$
$x\in\Omega.$ Now, we can state and prove the following result.

\begin{lemma}\label{P}
If $u$ is a solution of \eqref{serrinnoover}, $f'\leq nk$ and $Ric\geq (n-1)kg$, then $\Delta P\geq 0$. In particular, if the equality holds $u$ is radial and $\Omega$ is a metric ball.
\end{lemma}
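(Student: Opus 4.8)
The plan is to compute $\Delta P$ directly via the Bochner formula and then dispatch the curvature and Hessian terms using the hypotheses $f' \le nk$ and $\mathrm{Ric} \ge (n-1)kg$. First I would recall that for any smooth function $u$,
$$\tfrac{1}{2}\Delta |\nabla u|^2 = |\nabla^2 u|^2 + g(\nabla u, \nabla \Delta u) + \mathrm{Ric}(\nabla u, \nabla u),$$
and that differentiating the second term of $P$ gives $\Delta\!\left(\tfrac{2}{n}\int_0^u f\right) = \tfrac{2}{n}\bigl(f'(u)|\nabla u|^2 + f(u)\Delta u\bigr) = \tfrac{2}{n}f'(u)|\nabla u|^2 - \tfrac{2}{n}f(u)^2$, using $\Delta u = -f(u)$. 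Combining, and using $\nabla \Delta u = -f'(u)\nabla u$ so that $g(\nabla u, \nabla \Delta u) = -f'(u)|\nabla u|^2$, I get
$$\tfrac{1}{2}\Delta P = |\nabla^2 u|^2 - f'(u)|\nabla u|^2 + \mathrm{Ric}(\nabla u, \nabla u) + \tfrac{1}{n}f'(u)|\nabla u|^2 - \tfrac{1}{n}f(u)^2.$$

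Next I would split the Hessian term using $|\nabla^2 u|^2 = |\mathring\nabla^2 u|^2 + \tfrac{1}{n}(\Delta u)^2 = |\mathring\nabla^2 u|^2 + \tfrac{1}{n}f(u)^2$. The two $\tfrac{1}{n}f(u)^2$ terms then cancel, leaving
$$\tfrac{1}{2}\Delta P = |\mathring\nabla^2 u|^2 + \bigl(\mathrm{Ric}(\nabla u,\nabla u) - (n-1)k|\nabla u|^2\bigr) + \bigl(nk - f'(u)\bigr)\tfrac{n-1}{n}|\nabla u|^2.$$
Each of the three summands is nonnegative: the first trivially, the second by $\mathrm{Ric} \ge (n-1)kg$, and the third by $f' \le nk$ (since $n \ge 2$). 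Hence $\Delta P \ge 0$, proving subharmonicity.

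For the rigidity (equality) statement, I would observe that if $\Delta P \equiv 0$ — or more precisely, if $P$ attains an interior maximum and hence is constant by the strong maximum principle — then all three nonnegative summands vanish identically. In particular $\mathring\nabla^2 u \equiv 0$, so $\nabla^2 u = \tfrac{\Delta u}{n}g = -\tfrac{f(u)}{n}g$; moreover the vanishing of the third term forces $f'(u)|\nabla u|^2 = nk|\nabla u|^2$, and arguing as in Proposition \ref{keylemma0} (taking gradients and using $u = 0$ on $\partial\Omega$) gives $f(u) = f(0) + nku$, so $\nabla^2 u = -\bigl(\tfrac{f(0)}{n} + ku\bigr)g$. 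Then the Obata-type characterization used in the Corollary (from \cite[Lemma 6]{FR}) yields that $u$ is radial and $\Omega$ is a metric ball.

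The main obstacle I anticipate is purely bookkeeping: getting the signs and the factor $\tfrac{n-1}{n}$ right when combining the Bochner identity with the derivative of the integral term, and being careful that the traceless-Hessian decomposition exactly cancels the $f(u)^2$ contributions. There is no deep difficulty — the statement follows once these terms are assembled correctly — but one must be explicit that the equality case is invoked through the strong maximum principle (an interior max of the subharmonic $P$), since $\Delta P \ge 0$ alone does not force $\mathring\nabla^2 u = 0$ pointwise without that input, and then re-use the already-established Obata-type rigidity rather than reproving it.
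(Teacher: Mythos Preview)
Your proof is correct and follows essentially the same route as the paper: compute $\Delta P$ via the Bochner formula, use $|\nabla^2 u|^2=|\mathring\nabla^2 u|^2+\tfrac{1}{n}(\Delta u)^2$ to cancel the $f(u)^2$ terms, and bound the remaining expression using $\mathrm{Ric}\ge(n-1)kg$ and $f'\le nk$; the equality case then forces $\mathring\nabla^2 u=0$ and $f(u)=f(0)+nku$, whence the Obata-type result of \cite{FR} applies. One small remark: in the lemma ``equality'' simply means $\Delta P\equiv 0$, and from the pointwise inequality $\tfrac12\Delta P\ge|\mathring\nabla^2 u|^2\ge 0$ this immediately gives $\mathring\nabla^2 u\equiv 0$ without invoking the strong maximum principle --- that principle is only needed later (e.g.\ in Theorem~\ref{torsion}) to deduce constancy of $P$ from an interior maximum.
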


\begin{proof}
Firstly, a straightforward calculation shows that $\nabla P= \nabla^2u(\nabla u)+\frac{2}{n}f(u)\nabla u.$ From the Bochner formula we conclude that 
$$\Delta P=2|\nabla^2 u|^2+2Ric(\nabla u, \nabla u)+2\langle \nabla\Delta u,\nabla u\rangle+\frac{2}{n}f(u)\Delta u+\frac{2}{n}f'(u)|\nabla u|^2.$$

Since we are supposing that $u$ is a solution of \eqref{serrinok1}, $f'\leq nk$ and $Ric\geq (n-1)kg$, we get that 
\begin{eqnarray*}
\Delta P&=&2|\nabla^2 u|^2+2Ric(\nabla u, \nabla u)-\frac{2}{n}(\Delta u)^2+\frac{2-2n}{n}f'(u)|\nabla u|^2\\\nonumber
&= & 2|\mathring{\nabla}^2 u|^2+2Ric(\nabla u, \nabla u)+\frac{2(1-n)}{n}f'(u)|\nabla u|^2\\\nonumber
&\geq & 2|\mathring{\nabla}^2 u|^2\geq 0.
\end{eqnarray*}

On another hand, if $\Delta P=0$ we conclude that the above inequalities are, in fact, equalities. In particular, 
$$(-f'(u)+nk)|\nabla u|^2=0.$$

Thus, we have that $f(u)=f(0)+nku.$ Since $|\mathring{\nabla}^2 u|^2=0$, we can guarantee that $\nabla^2u=-(\frac{f(0)}{n}+ku)g.$ Finally from \cite{FR}, we can conclude that $u$ is radial and $\Omega$ is a metric ball.

\end{proof}

Given $u$ a solution of the problem \eqref{serrinnoover} and let $\Gamma$ be a connected component of $\partial\Omega,$ we define the \emph{generalized normalized wall shear stress} of
$\Gamma$ as 
$$\tau(\Gamma)=\frac{\max_{\partial\Gamma}|\nabla u|^2}{\int_0^{u_{\max}} f(t)dt}.$$

We observe that Espinar and Marín show that the solution of the eigenvalue problem $\Delta u=-2u,$ on the 2-dimensional sphere, is a radial function and the domain $\Omega$ is a geodesic disc, since that $\tau(\Gamma)\leq 1,$ for all connected component of $\partial\Omega,$ see \cite[Theorem 3.1] {espinar}. Following the same ideas, Andrade, Freitas and Marín classify the solutions of the problem \eqref{serrinnoover}, in the particular case $f(u)=-1-nku$, in terms of generalized normalized wall shear stress, see \cite[Lemma 5.1] {andrade}.


\begin{theorem}\label{torsion}
     Let $M$ be a manifold such that $Ric\geq (n-1)kg$. Let $u$ be a solution of \eqref{serrinnoover}, where $f'\leq nk$. If
     $$\frac{\max_{\partial\Omega}|\nabla u|^2}{\int_0^{u_{\max}} f(t)dt}\leq \frac{2}{n},$$
     where $u_{\max}$ denotes the maximum value of the function $u,$ then $u$ is radial and $\Omega$ is a metric ball.
\end{theorem}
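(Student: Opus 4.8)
The plan is to combine the subharmonicity of the $P$-function from Lemma \ref{P} with the strong maximum principle: the hypothesis on the generalized normalized wall shear stress is exactly the reverse of an inequality that always holds, so it forces $P$ to attain its maximum at an interior point, which makes $P$ constant, and then the rigidity follows from the equality case of Lemma \ref{P}.

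First I would observe that, since $u\equiv 0$ on $\partial\Omega$, the second term of $P$ vanishes on the boundary, so $P=|\nabla u|^2$ there. As $\Delta P\geq 0$ on $\Omega$ by Lemma \ref{P}, the weak maximum principle gives $\max_{\overline{\Omega}}P=\max_{\partial\Omega}|\nabla u|^2$. Next, let $x_0\in\Omega$ be a point where $u$ attains its maximum $u_{\max}$; since $u$ vanishes on $\partial\Omega$ one has $u_{\max}\geq 0$, and $u_{\max}>0$ is implicit in the statement (as $\int_0^{u_{\max}}f(t)\,dt$ appears in a denominator), so $x_0$ is an interior critical point, whence $\nabla u(x_0)=0$ and $P(x_0)=\frac{2}{n}\int_0^{u_{\max}}f(t)\,dt$. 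Substituting into the previous identity yields
$$\frac{2}{n}\int_0^{u_{\max}}f(t)\,dt=P(x_0)\leq\max_{\overline{\Omega}}P=\max_{\partial\Omega}|\nabla u|^2,$$
i.e.\ the quotient $\dfrac{\max_{\partial\Omega}|\nabla u|^2}{\int_0^{u_{\max}}f(t)\,dt}$ is at least $\dfrac{2}{n}$. Combined with the hypothesis, which is the opposite inequality, this must be an equality, so $P(x_0)=\max_{\overline{\Omega}}P$ is attained at the interior point $x_0$. The strong maximum principle applied to the subharmonic (and smooth, since $u\in C^\infty$) function $P$ on the connected domain $\Omega$ then forces $P$ to be constant, hence $\Delta P\equiv 0$, and the equality case of Lemma \ref{P} gives that $u$ is radial and $\Omega$ is a metric ball.

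I expect the only delicate point to be the bookkeeping that ensures $u$ attains its maximum at an interior critical point and that $\int_0^{u_{\max}}f(t)\,dt>0$, so that the wall-shear-stress quotient is meaningful and the inequalities have the claimed sign; under the standing hypotheses this is essentially built into the statement. Beyond that, the argument is a direct application of the maximum principle together with Lemma \ref{P}, so there is no genuinely hard analytic step once that lemma is available.
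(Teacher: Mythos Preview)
Your proposal is correct and follows essentially the same route as the paper: use Lemma \ref{P} to get $\Delta P\geq 0$, observe that the hypothesis forces the boundary maximum of $P$ to coincide with the interior value $P(x_0)=\frac{2}{n}\int_0^{u_{\max}}f$, and conclude via the strong maximum principle that $P$ is constant. Your write-up is in fact slightly more careful than the paper's in noting the implicit positivity of $\int_0^{u_{\max}}f(t)\,dt$ needed for the quotient hypothesis to have content.
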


\begin{proof}
 Indeed, since  $P(x)=|\nabla u|^2(x)+\frac{2}{n}\int_0^{u(x)}f(t)dt$, $x\in\Omega,$ and $u=0$ along of the boundary we get that 
 $$\max_{\partial\Omega} P=\max_{\partial\Omega}|\nabla u|^2\leq \frac{2}{n}\int_0^{u_{\max}} f(t)dt.$$

 Taking the point $p_0\in \Omega$ such that $\max_\Omega u=u_{\max}=u(p_0)$ we conclude that $P(p_0)=\frac{2}{n}\int_0^{u_{\max}} f(t)dt.$

Thus, since $P$ is subharmonic function, we conclude by the maximum principle that
$P$ is a constant and, therefore, $u$ is radial and $\Omega$ is a metric ball.

\end{proof}

\begin{proof}[Proof of Theorem \ref{3}]

From Lemma \ref{P}, we get that $\Delta P\geq 0,$ thus from Hopf maximum principle $P_\nu\geq 0$ on the boundary. Now, suppose by contradiction that $P$ is not a constant. Note that 
$P=|\nabla u|^2=\psi^2(r)$ along the boundary and since $\psi$ is increasing  then $\max_{\partial\Omega} P=\psi^2(R).$  

Note that $P_{\nu}=2u_\nu\nabla^2u(\nu,\nu)+\frac{2}{n}f(0)u_\nu$. Thus, from the overdetermined conditions we get that 
$$ P_{\nu}=-2\psi(r)\nabla^2u(\nu,\nu)-\frac{2}{n}f(0)\psi(r).$$
It is not hard to see that 
\begin{eqnarray*}
 Hu_{\nu}=|\nabla u|\operatorname{div} \frac{\nabla u}{|\nabla u|}=\Delta u-\nabla^2 u(\nu,\nu)=-f(0)-\nabla^2 u(\nu,\nu).
\end{eqnarray*}

 Let $q$ be a point on the boundary such that $P(q)=\max_{\partial\Omega} P=\psi^2(R).$ From above equalities we get at $q,$
 \begin{equation*}
0< P_{\nu}(q)=-2\psi(R)(-f(0)+H(q)\psi(R))-\frac{2}{n}f(0)\psi(R).
\end{equation*}

Thus, 
\begin{equation}\label{comparison}
H(q)<\frac{(n-1)f(0)}{n\psi(R)}.
\end{equation}

\begin{center}
    \begin{tikzpicture}[scale=1.3]

\draw[color=black,thick] (0, 0) circle (1.225);

\draw [fill=black] (0,0) circle (0.6pt);

\draw [fill=black] (-1.17,0.36) circle (0.6pt);


\draw[color=black,line width=0.6pt] (-1,0.6) .. controls (-0.6,1.1)  and (0.2,0.6) .. 
(0.5,0.8) .. controls (1,1.1) and (1.3,0) ..
(0.6,-0.8) .. controls (0.45,-0.99) and (0.4,-1.2)..
(-0.3,-0.8).. controls (-0.55,-0.7) and (-0.45,-0.5)..
(-0.6,-0.3).. controls (-0.75,-0.1) and (-0.85,-0.05) .. 
(-0.9,0) ..controls (-1,0.1) and
(-1.4,0.2).. (-1,0.6) ;




\draw  (0,-0.15) node { \textcolor{black}{$p$}};

\draw  (-1.28,0.36) node { \textcolor{black}{$q$}};

\draw  (0.4,-0.7) node { \textcolor{black}{$\Omega$}};

\draw  (0.6,-1.35) node { \footnotesize\textcolor{black}{$\partial B_R(p)$}};

\end{tikzpicture}
\end{center}

Taking into account that the mean curvature of the geodesic spheres of $M$ as given by $\frac{h'(R)}{h(R)}$, we conclude from the tangency principle that 
$$H(q)\geq \frac{h'(R)}{h(R)}.$$
From \eqref{comparison} and the above inequality, we conclude that $\psi(R)<\frac{(n-1)f(0)}{n}\frac{h(R)}{h'(R)}$ which is a contradiction. Thus, $P$ is constant and, therefore, from Lemma \ref{P} we conclude the desired result.

\end{proof}

\section{A particular case in the annular region}

Firstly, let us consider the  Serrin-type problem in annular domains $\Omega=\Omega_0\setminus\Omega_1,$ 
where  $\Omega_0$ and $\Omega_1$ are bounded domains such that $\Omega_1\subset\Omega_0.$ More precisely, we deal with the following problem
\begin{equation}\label{serrin2}
\left\{\begin{array}{rcl}
\Delta u +nku &=& -n\quad \hbox{in}\quad  \Omega\\
u&=& a \quad \hbox{on}\quad  \Gamma_1\\
u&=& 0\quad \hbox{on}\quad  \Gamma_0,\\
\end{array}\right.
\end{equation}
where $k\in\mathbb{R}$,  $\partial\Omega_0=\Gamma_0,$ $\partial\Omega_1=\Gamma_1$ and $\partial\Omega=\Gamma_1\cup\Gamma_0.$

\begin{center}
    \begin{tikzpicture}[scale=1.3]

\draw[line width=0.5pt,color=black] 
(-1.3,1.3) .. controls (0,1) and (1.3,1.9) ..
(1.6,1.2).. controls (1.9,0.7) and (1,0.3) .. 
(1.4,-0.5) .. controls (1.6,-0.8) and (1.9,-1.4) ..  
(1.6,-1.6) .. controls (1.2,-1.7) and (1,-1.5) .. 
(0.7,-1.4) .. controls (0.3,-1.2) and (-0.5,-1.9) ..
(-0.9,-1.7) .. controls (-1.2,-1.5) and (-1.4,-1) ..
(-1.2,-0.5) .. controls (-1.1,-0.2) and (-1.4,-0.2) ..
(-1.7,0.3) .. controls (-2.1,1.1) and (-1.7,1.4) ..  (-1.3,1.3) ;

\draw[color=black,line width=0.6pt,fill=white,fill opacity=1] (-1,0.42) .. controls (-0.75,1.1)  and (0.2,0.2) .. 
(0.5,0.6) .. controls (1,1.1) and (1.3,0) ..
(0.6,-0.3) .. controls (0.45,-0.34) and (0.4,-0.44)..
(0.3,-0.6) .. controls (0.2,-0.8) and (0,-0.9)..
(-0.3,-0.8).. controls (-0.55,-0.7) and (-0.45,-0.5)..
(-0.6,-0.3).. controls (-0.75,-0.1) and (-0.85,-0.05) .. 
(-0.9,0) ..controls (-1,0.1) and
(-1.1,0.15).. (-1,0.42) ;




\draw  (0.45,0.35) node { \textcolor{black}{$\Gamma_1$}};


\draw  (1.85,-1) node { \textcolor{black}{$\Gamma_0$}};


\draw  (1.1,-1) node { \textcolor{black}{$\Omega$}};

\end{tikzpicture}
\end{center}

\medskip
Again, as an application of the Reilly formula, we have the following integral identity
 
 \begin{theorem}\label{keylemmaanelar}
 Let $u$ be a solution of \eqref{serrin2}, then   
\begin{equation*}
\int_{\Omega}{| \mathring{\nabla}^2 u}|^2+\int_{\Omega}[\mbox{Ric}-(n-1)kg](\nabla u,\nabla u)=-\int_{\Gamma_0}u_{\nu}[(n-1)+H_0u_{\nu}]-\int_{\Gamma_1}u_{\nu}(H_1u_\nu+(n-1)(ak+1)),
\end{equation*}
where $H_0$ and $H_1$ are the mean curvature of $\Gamma_0$ and $\Gamma_1$ respectively.
 \end{theorem}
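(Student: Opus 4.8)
The plan is to apply Reilly's identity directly to the solution $u$ of the annular problem \eqref{serrin2}, exactly as was done in Proposition \ref{keylemma0}, but now keeping track of the two boundary components $\Gamma_0$ and $\Gamma_1$ separately. Since $\Delta u = -(n+nku) = -n(1+ku)$, Reilly's identity gives
$$\int_{\Omega}|\mathring{\nabla}^2 u|^2 = -\int_{\partial\Omega}H u_\nu^2 - \int_{\Omega}\mathrm{Ric}(\nabla u,\nabla u) + \frac{n-1}{n}\int_{\Omega}(\Delta u)^2,$$
where the boundary integral splits as $\int_{\Gamma_0} H_0 u_\nu^2 + \int_{\Gamma_1} H_1 u_\nu^2$ (here one must be careful that $\nu$ is the outward normal of $\Omega$, so it points out of $\Omega_0$ on $\Gamma_0$ and into $\Omega_1$ on $\Gamma_1$, and $H_0,H_1$ are taken accordingly). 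The terms coming from $\bar\nabla z$ and $\bar\Delta z$ drop out because $u$ is constant ($=0$ or $=a$) on each component.

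Next I would handle the term $\int_\Omega (\Delta u)^2$ by the same integration-by-parts trick: writing $(\Delta u)^2 = -\Delta u \cdot \mathrm{div}(\nabla u)\cdot(\text{sign})$, more precisely
$$(\Delta u)^2 = n(1+ku)\,\mathrm{div}(\nabla u) \cdot(-1)\cdot(-1) = -\mathrm{div}\big(n(1+ku)\nabla u\big) + nk|\nabla u|^2,$$
wait — more carefully, $(\Delta u)^2 = \Delta u \cdot \Delta u = -n(1+ku)\Delta u$, and $\mathrm{div}(n(1+ku)\nabla u) = n(1+ku)\Delta u + nk|\nabla u|^2$, so $(\Delta u)^2 = -\mathrm{div}(n(1+ku)\nabla u) + nk|\nabla u|^2$. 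Integrating, $\int_\Omega(\Delta u)^2 = -\int_{\partial\Omega} n(1+ku)u_\nu + nk\int_\Omega|\nabla u|^2$, and since $u=0$ on $\Gamma_0$ and $u=a$ on $\Gamma_1$ the boundary term becomes $-n\int_{\Gamma_0}u_\nu - n(1+ka)\int_{\Gamma_1}u_\nu$. Crucially, in the annular case $\mathrm{Ric} = (n-1)kg$ is an exact Einstein condition, which should make the term $\frac{n-1}{n}\cdot nk\int_\Omega|\nabla u|^2$ cancel precisely against part of $-\int_\Omega\mathrm{Ric}(\nabla u,\nabla u)$ when one moves $\int_\Omega[\mathrm{Ric}-(n-1)kg](\nabla u,\nabla u)$ to the left-hand side — giving an identity rather than an inequality, consistent with the statement.

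Assembling: move $\int_\Omega[\mathrm{Ric}-(n-1)kg](\nabla u,\nabla u)$ to the left, substitute the expression for $\int_\Omega(\Delta u)^2$, and collect the $\Gamma_0$ and $\Gamma_1$ boundary integrals. On $\Gamma_0$ one gets $-\int_{\Gamma_0}H_0 u_\nu^2 - \frac{n-1}{n}\cdot n\int_{\Gamma_0}u_\nu = -\int_{\Gamma_0}u_\nu[H_0 u_\nu + (n-1)]$, matching the claimed term. On $\Gamma_1$ one gets $-\int_{\Gamma_1}H_1 u_\nu^2 - \frac{n-1}{n}\cdot n(1+ka)\int_{\Gamma_1}u_\nu = -\int_{\Gamma_1}u_\nu[H_1 u_\nu + (n-1)(1+ak)]$, again matching. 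I expect the main obstacle to be purely bookkeeping: getting the orientation of the normal $\nu$ and hence the signs of $H_0$ and $H_1$ on the inner versus outer boundary component consistent with the conventions implicit in the statement, and making sure the Einstein cancellation is handled so that the final relation is an equality. No maximum principle or rigidity input is needed here — it is a direct computation, so once the sign conventions are pinned down the proof is essentially the same line as Proposition \ref{keylemma0}.
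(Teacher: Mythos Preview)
Your proposal is correct and follows essentially the same route as the paper: apply Reilly's identity (the tangential terms vanish since $u$ is constant on each boundary component), rewrite $\int_\Omega(\Delta u)^2$ via the divergence theorem using $\Delta u=-n(1+ku)$, and collect the boundary contributions on $\Gamma_0$ and $\Gamma_1$. One small clarification: the identity does \emph{not} require the Einstein condition $\mathrm{Ric}=(n-1)kg$ --- the cancellation of the $(n-1)k\int_\Omega|\nabla u|^2$ term is purely algebraic once you move $\int_\Omega[\mathrm{Ric}-(n-1)kg](\nabla u,\nabla u)$ to the left, so the statement holds on any manifold (the Einstein hypothesis enters only later, in Theorem~\ref{4}).
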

 \begin{proof}
 Reilly's identity applied to the solution of \eqref{serrin2} implies
\begin{equation}\label{eq1}
 \int_{\Omega}|\mathring{\nabla}^2 u|^2=-\int_{\partial\Omega}Hu_{\nu}^2-\int_{\Omega}\mbox{Ric}(\nabla u,\nabla u)+\frac{n-1}{n}\int_{\Omega}(\Delta u)^2.   
\end{equation}
 
 Now, we note that from the divergence theorem
 \begin{eqnarray*}
 \int_{\Omega}(n+nku)\Delta u&=&n\int_{\partial \Omega}u_\nu+nk\int_{\Omega}(\frac{\Delta u^2}{2}-|\nabla u|^2)\\
 &=&n\int_{\Gamma_0}u_\nu+\int_{\Gamma_1}(nu_\nu+nka u_\nu)-nk\int_{\Omega}|\nabla u|^2
 \end{eqnarray*}
 
 Plugging the above equality in \eqref{eq1} we obtain the result.
 
 \end{proof}


Now, let us consider the following overdetermined problem over bounded annular domains $\Omega$ contained in a manifold $M$, endowed with a conformal vector field $X$, as follows
\begin{equation}\label{serrinconst}
    \begin{cases}
        \Delta u +nku = -n & \hbox{in}\quad  \Omega\\
u= a,\quad u_\nu=c_1  &  \hbox{on} \quad  \Gamma_1\\
u= 0, \quad u_\nu=c_0  & \hbox{on}\quad  \Gamma_0,
    \end{cases}
\end{equation}
where $k,c_0,c_1\in\mathbb{R}$ and $\partial\Omega=\Gamma_0\cup\Gamma_1.$ 

The next result is a Minkowski-type formula addressed to Einstein manifolds endowed with a conformal vector field $X.$

\begin{proposition}\label{keylemma}
Let $M$ be an Einstein manifold with $Ric=(n-1)kg$ and endowed with a closed conformal vector field $X$. Let $u$ be a solution of the Serrin problem \eqref{serrinconst}, then
$$\int_{\Gamma_0}\left(\frac{u_\nu H_0}{(n-1)}+1\right)\langle X,\nu\rangle=-\int_{\Gamma_1}\left(1+\frac{u_\nu H_1}{n-1}+ak\right)\langle X,\nu\rangle,$$
where $H_i$ is the mean curvature of $\Gamma_i.$
\end{proposition}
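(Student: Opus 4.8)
The plan is to produce a divergence-free vector field on $\Omega$ built from $u$ and $X$, and then read off the identity from the divergence theorem, using crucially that both $u$ and $u_\nu$ are constant on each component $\Gamma_0,\Gamma_1$ of $\partial\Omega$.

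First I would set $W=\nabla_X\nabla u+(ku+1)X$, where $\nabla_X\nabla u$ denotes the vector field metrically dual to $\nabla^2u(X,\cdot)$. To compute its divergence, I use the Bochner-type commutation formula $\operatorname{div}(\nabla^2u)=\nabla(\Delta u)+\operatorname{Ric}(\nabla u,\cdot)^\sharp$ together with the fact that a closed conformal field satisfies $\nabla_YX=\varphi Y$, hence $\operatorname{div}X=n\varphi$ and $\langle\nabla^2u,\nabla X\rangle=\varphi\,\Delta u$. These give
$$\operatorname{div}(\nabla_X\nabla u)=\langle\nabla(\Delta u),X\rangle+\operatorname{Ric}(\nabla u,X)+\varphi\,\Delta u.$$
Now inserting $\Delta u=-n-nku$ (so $\nabla(\Delta u)=-nk\nabla u$), the Einstein condition $\operatorname{Ric}=(n-1)kg$, and $\operatorname{div}(kuX)=k\langle\nabla u,X\rangle+nku\varphi$, all the $\langle\nabla u,X\rangle$ and $u\varphi$ terms cancel and one is left with $\operatorname{div}W=0$.

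Next I would apply the divergence theorem, obtaining $\int_{\partial\Omega}\big[\nabla^2u(X,\nu)+(ku+1)\langle X,\nu\rangle\big]=0$. Along each $\Gamma_i$ the function $u$ equals a constant ($0$ on $\Gamma_0$, $a$ on $\Gamma_1$) and $u_\nu$ equals a constant ($c_0$ on $\Gamma_0$, $c_1$ on $\Gamma_1$); thus $\nabla u=u_\nu\nu$ there, and for $Y$ tangent to $\Gamma_i$ one gets $\nabla^2u(Y,\nu)=Y(u_\nu)-\langle\nabla u,\nabla_Y\nu\rangle=0$. Decomposing $X=X^\top+\langle X,\nu\rangle\nu$ this yields $\nabla^2u(X,\nu)=\langle X,\nu\rangle\,\nabla^2u(\nu,\nu)$, and from the boundary decomposition of the Laplacian (the tangential Laplacian of a constant vanishes) $\nabla^2u(\nu,\nu)=\Delta u-Hu_\nu$, exactly as in the proof of Theorem \ref{3}.

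Finally I would substitute componentwise: on $\Gamma_0$, $\Delta u=-n$, so the integrand becomes $[(-n-c_0H_0)+1]\langle X,\nu\rangle=-(n-1)\big(\tfrac{c_0H_0}{n-1}+1\big)\langle X,\nu\rangle$; on $\Gamma_1$, $\Delta u=-n-nka$, so the integrand becomes $[(-n-nka-c_1H_1)+(ka+1)]\langle X,\nu\rangle=-(n-1)\big(1+\tfrac{c_1H_1}{n-1}+ak\big)\langle X,\nu\rangle$. Dividing the relation $\int_{\Gamma_0}+\int_{\Gamma_1}=0$ by $-(n-1)$ and recalling $u_\nu=c_0$ on $\Gamma_0$, $u_\nu=c_1$ on $\Gamma_1$ gives the stated formula. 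The main obstacle is recognizing the correct vector field $W$: once it is in hand, verifying $\operatorname{div}W=0$ and carrying out the boundary bookkeeping are routine; a secondary care point is getting the commutation formula $\operatorname{div}(\nabla^2u)=\nabla(\Delta u)+\operatorname{Ric}(\nabla u,\cdot)^\sharp$ and the orientation of $\nu$ on $\Gamma_1$ right.
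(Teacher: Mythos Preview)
Your argument is correct, but it follows a genuinely different route from the paper's. The paper works with the conformal factor $\varphi$ of $X$: it applies Green's identity to the pair $(u,\varphi)$, uses that $\Delta\varphi=-nk\varphi$ to get $\int_\Omega(u\Delta\varphi-\varphi\Delta u)=n\int_\Omega\varphi=\int_{\partial\Omega}\langle X,\nu\rangle$, then converts the resulting $\int_{\Gamma_i}\varphi\,u_\nu$ terms via the Minkowski identity $(n-1)\int_{\Gamma_i}\varphi=\int_{\Gamma_i}H_i\langle X,\nu\rangle$, and handles the $\int_{\Gamma_1}a\varphi_\nu$ term through the Einstein relation $\varphi_\nu=-k\langle X,\nu\rangle$. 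Your approach instead packages everything into a single divergence-free field $W=\nabla_X\nabla u+(ku+1)X$ and reads off the identity from one application of the divergence theorem, with the mean curvature entering through the boundary decomposition $\nabla^2u(\nu,\nu)=\Delta u-H u_\nu$ rather than through Minkowski. The paper's method makes the role of $\varphi$ and the Minkowski formula explicit and modular; yours is more self-contained and avoids invoking Minkowski separately, at the cost of having to guess the right $W$. Both use the Einstein hypothesis and the constancy of $u_\nu$ on each $\Gamma_i$ at essentially the same junctures.
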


\begin{proof}
Firstly, since $X$ is a closed conformal vector field there exists a smooth function $\varphi$ such that $\operatorname{div} (X)=n\varphi$ and $\Delta\varphi=-nk\varphi,$ see \cite{KH}.
Taking into account that $\Delta u=-n-nku$ we conclude that 
\begin{equation}\label{main2}
\int_{\Omega}(u\Delta\varphi-\varphi\Delta u)=n\int_{\Omega}\varphi=\int_{\partial\Omega}\langle X,\nu\rangle.
\end{equation}

Now, we also note that from the divergence theorem
$$
\int_{\Omega}\operatorname{div}(u\nabla\varphi-\varphi\nabla u)=-\int_{\Gamma_0}\varphi u_\nu+\int_{\Gamma_1}a\varphi_\nu-\int_{\Gamma_1}\varphi u_\nu
$$
A straightforward calculation shows that $\frac{n-1}{n}\operatorname{div} X=\widetilde{\operatorname{div}}X^T+H_i\langle X,\nu\rangle,$ where $H_i$ denotes the mean curvature of $\Gamma_i.$ Thus, from divergence theorem we get that:
$$
\int_{\Gamma_i}\varphi=\frac{1}{n-1}\int_{\Gamma_i}H_i\langle X,\nu\rangle
$$

From above equations, we deduce that 
\begin{equation}\label{main 1}
\int_{\Omega}\operatorname{div}(u\nabla\varphi-\varphi\nabla u)=-\frac{1}{n-1}\int_{\Gamma_0}H_0 u_\nu \langle X,\nu\rangle+\int_{\Gamma_1}a\varphi_\nu-\frac{1}{n-1}\int_{\Gamma_1}H_1\langle X,\nu\rangle u_\nu.
\end{equation}

On another hand, since $M$ is endowed with a closed vector field $X,$ we have that 
$\nabla_YX=\varphi Y,$ for all $Y\in\mathfrak{X}(M).$
Thus, the curvature tensor
\begin{eqnarray*}
 R(u,v)X
 &=&\langle u,\nabla \varphi\rangle v-\langle v,\nabla \varphi\rangle u,
\end{eqnarray*}
for all $u,v\in\mathfrak{X}(M)$. Then $\operatorname{Ric}(X,\cdot)=-(n-1)\nabla\varphi$. 

Thus, since $M$ is Einstein, we have that $-(n-1)\varphi_\nu=\frac{R}{n}\langle X,\nu\rangle$ and, therefore, $\varphi_\nu=-k\langle X,\nu\rangle.$ Finally, from \eqref{main 1} and \eqref{main2}, we obtain the desired result.

\end{proof}

We recall that a domain is called star-shaped with respect to $p$ if each component of the boundary $\partial\Omega$ can be written as a graph over a geodesic sphere with center $p.$ 
Given a domain $\Omega$, endowed with a closed vector field $X$, such that $\partial\Omega=\Gamma_0\cup\Gamma_1$, if $\partial\Omega$ is star-shaped manifold let us assume that $\langle X,\nu\rangle>0$, on $\Gamma_0$, and $\langle X,\nu\rangle <0$, on $\Gamma_1$. 

 \medskip 

\begin{proof}[Proof of Theorem \ref{4}]


Since $\Omega$ has constant scalar curvature,  $\partial\Omega$ is a star-shaped manifold and 
 $u_\nu H_1\geq -(ka+1)(n-1)$ and $u_\nu H_0\leq -(n-1)$, on $\Gamma_1$ and  $\Gamma_0$, respectively, from Proposition \ref{keylemma}, we have that 

$$0\geq \int_{\Gamma_0}\left(\frac{u_\nu H_0}{(n-1)}+1\right)\langle X,\nu\rangle=-\int_{\Gamma_1}\left(1+\frac{u_\nu H_1}{n-1}+ak\right)\langle X,\nu\rangle\geq 0$$

Thus, $(n-1)+H_0u_{\nu}=0$ and $(ka+1)(n-1)+u_\nu H_1=0.$ From, Theorem \ref{keylemmaanelar} we obtain that $\nabla u$ is a closed conformal vector field.
Finally, we conclude that $\Gamma_i$ are umbilical hypersurfaces.

\end{proof}

Taking into account that the space forms can be seen as warped products we have the following result.

\begin{corollary}\label{sphere}
Let $\Omega$ be an annular domain contained in the Euclidean sphere $\mathbb{S}^n$. Suppose that $u$ is a solution of the Serrin problem \eqref{serrinconst} such that $c_1 H_1\geq -(a+1)(n-1)$ on $\Gamma_1$ and $c_0 H_0\leq -(n-1)$ on $\Gamma_0$. If $\partial\Omega$ is star-shaped manifold, then $\Omega$ is the standard annulus $\lbrace x\in\mathbb{S}^n; R< r(x)<R_1\rbrace$, $u$ is radial and given by 
$$u(x)=\frac{1}{\cos(R_1)}(\cos (r(x))-\cos(R_1))$$

\end{corollary}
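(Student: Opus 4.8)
The plan is to deduce the statement from Theorem \ref{4}, sharpened by the fact that $\mathbb{S}^n$ is \emph{exactly} Einstein, and then to pin down $u$ by an Obata-type argument. First I would verify that the round sphere fits the hypotheses of Theorem \ref{4}: it satisfies $Ric=(n-1)g$, so here $k=1$; and writing $\mathbb{S}^n=[0,\pi]\times_{\sin}\mathbb{S}^{n-1}$ exhibits it as a warped product, so, as recalled in the Introduction, $X=\sin(t)\,\partial_t$ is a closed conformal field with factor $\varphi=\cos t$. Choosing the pole $q$ (hence $X$) so that $\partial\Omega$ is star-shaped with respect to $q$, with the sign convention $\langle X,\nu\rangle>0$ on $\Gamma_0$ and $\langle X,\nu\rangle<0$ on $\Gamma_1$, and observing that for $k=1$ the inequalities $c_1H_1\ge-(a+1)(n-1)$ and $c_0H_0\le-(n-1)$ are precisely the hypotheses $c_1H_1\ge-(ka+1)(n-1)$, $c_0H_0\le-(n-1)$ of Theorem \ref{4}, that theorem applies. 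Looking inside its proof, once the two boundary terms in the identity of Theorem \ref{keylemmaanelar} vanish the left-hand side reads $\int_\Omega|\mathring{\nabla}^2u|^2+\int_\Omega[Ric-(n-1)g](\nabla u,\nabla u)=0$, and since $\mathbb{S}^n$ is Einstein with $Ric-(n-1)g\equiv 0$ this forces $\mathring{\nabla}^2u\equiv 0$ on $\Omega$.

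Next I would turn this into an equation for $u$. Since $\Delta u=-n-nu$, the identity $\mathring{\nabla}^2u=0$ reads $\nabla^2u=\tfrac{\Delta u}{n}\,g=-(1+u)\,g$. Putting $w:=u+1$, this is the Obata equation $\nabla^2w+w\,g=0$ on $\Omega\subset\mathbb{S}^n$. This overdetermined linear system is of finite type: a solution on a connected set is determined by the pair $(w(p),\nabla w(p))$ at any point $p$, so its solution space has dimension at most $n+1$; on $\mathbb{S}^n$ the restrictions of the $n+1$ linear coordinate functions of $\mathbb{R}^{n+1}$ already give $n+1$ independent solutions, so $w$ is itself such a restriction (this is the Obata-type rigidity used before, cf. \cite{FR}). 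Hence $w=b\cos r$ for some $q'\in\mathbb{S}^n$, $r=\operatorname{dist}(q',\cdot)$, and a constant $b$; the boundary conditions $u|_{\Gamma_0}=0$, $u|_{\Gamma_1}=a$ rule out the degenerate case $b=0$. Thus $u(x)=b\cos r(x)-1$, and in particular $u$ is radial.

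Finally I would read off the geometry. The components $\Gamma_0=\{u=0\}$ and $\Gamma_1=\{u=a\}$ are level sets of the radial function $u$, hence geodesic spheres centered at $q'$, say of radii $R_1$ and $R$; since $\nabla u=-b\sin(r)\,\partial_r$ vanishes only at the poles $r=0,\pi$, which lie outside $\overline\Omega$, the region $\Omega$ between $\Gamma_1$ and $\Gamma_0$ is exactly the standard annulus $\{R<r<R_1\}$ (in particular $q'$ lies in the hole $\Omega_1$). The normalization $u=0$ on $\{r=R_1\}$ gives $b\cos R_1=1$, so $b=1/\cos R_1$ and
\[
u(x)=\frac{\cos r(x)}{\cos R_1}-1=\frac{1}{\cos R_1}\bigl(\cos r(x)-\cos R_1\bigr),
\]
as claimed; the compatibility relation $\cos R=(1+a)\cos R_1$ on $\Gamma_1$ then holds automatically.

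I expect the crux to be the middle step, the Obata-type classification: the equation $\nabla^2w+w\,g=0$ is given only on the annular domain $\Omega$ and carries no completeness assumption, so one cannot simply quote the global Obata theorem but must instead use the finite dimensionality (analytic continuation of solutions) on $\mathbb{S}^n$, and then check that the removed antipodal point does not obstruct the conclusion, so that $\Omega$ is genuinely the round annulus. The reduction to Theorem \ref{4} and the bookkeeping with $k=1$ are routine.
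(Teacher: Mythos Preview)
Your proof is correct and reaches the same conclusion as the paper, but the middle step is organized differently. After invoking Theorem \ref{4}, the paper first uses the umbilicity to identify $\Gamma_1$ as a geodesic sphere centered at some $p$, then sets $f(s)=u(\gamma(s))$ along unit geodesics from $p$ and solves the ODE $f''=-f-1$ (which follows from $\nabla^2u=-(1+u)g$); the boundary data $f(R)=a$, $f'(R)=-c_1$ together with the equality $H_1c_1+(n-1)(a+1)=0$ obtained inside the proof of Theorem \ref{4} kill the sine term and force $f(s)=\cos s/\cos R_1-1$, whence $u$ is radial about $p$ and $\Gamma_0$ is the concentric sphere $\{r=R_1\}$. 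You instead pass directly to the Obata equation $\nabla^2w+wg=0$ for $w=u+1$ and use the finite-type/prolongation argument (solutions on a connected open subset of $\mathbb{S}^n$ are restrictions of global ones, i.e., of linear coordinate functions) to get $w=b\cos r$ in one stroke. Your route is a bit cleaner in that it produces the center and the radiality simultaneously and never needs the explicit mean-curvature identity on $\Gamma_1$; the paper's route is more elementary in that it avoids the unique-continuation step for $\nabla^2w+wg=0$ on a noncomplete domain, trading it for a direct ODE computation along radial geodesics.
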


\begin{proof}
From Theorem \ref{4}, we conclude that components of boundary $\Gamma_0$ and $\Gamma_1$ are umbilical hypersurfaces and, therefore, are geodesic spheres with radius $R$ and $R_1$, respectively. Taking $p$ the center of the geodesic ball $\Gamma_1$ and a normalized unitary geodesic $\gamma$ such that $\gamma(0)=p$ we can introduce the function $f(s)=u(\gamma(s))$. Thus, a straightforward calculation shows that 
$$f''(s)=-f(s)-1,$$
 $f(R)=a$ and $f'(R)= -c_1.$

The general solution of the above ODE is given by $f(s)=x\cos(s)+y\sin(s)-1$. From initial conditions we conclude that 
$x=(a+1)\cos(R)+c_1\sin(R)$ and $y=(a+1)\sin(R)-c_1\cos(R).$

We recall that, $H_1u_\nu+(n-1)(a+1)=0.$ Since the mean curvature is given by $H_1=-(n-1)\frac{\cos(R)}{\sin(R)}$ on $\Gamma_1$, we get that
$$c_1=(a+1)\frac{\sin(R)}{\cos(R)}.$$ Thus, $y=0.$

On the other hand, since $f(R_1)=0$ we conclude that $(a+1)=\frac{\cos(R)}{\cos(R_1)}$ and, therefore, 
$$x=\frac{a+1}{\cos(R)}=\frac{1}{\cos(R_1)}.$$

Then we conclude that $f(s)=\frac{\cos s}{\cos R_1}-1$ and therefore, $u$ is a radial function given by $u(x)=\frac{1}{\cos(R_1)}(\cos (r(x))-\cos(R_1)).$ Taking into account that $u$ is zero along $\Gamma_0$ we conclude that  $\Omega$ is the standard annulus $\lbrace x\in\mathbb{S}^n; R< r(x)<R_1\rbrace$.
\end{proof}

\bigskip

As a consequence of the Theorem \ref{4} we obtain a rigidity result addressed to annular regions contained in the hyperbolic space as follows.


\begin{corollary}
Let $\Omega$ be an annular domain contained in the hyperbolic space $\mathbb{H}^n.$ Suppose that $u$ is a solution of the Serrin problem such that $c_1 H_1\geq (a-1)(n-1)$ on $\Gamma_1$ and $c_0 H_0\leq -(n-1)$ on $\Gamma_0$. If $\partial\Omega$ is star-shaped manifold, then $\Omega$ is the standard annulus $\lbrace x\in\mathbb{H}^n; R< r(x)<R_1\rbrace$, $u$ is radial and given by 
$$u(x)=\frac{1}{\cosh(R_1)}(-\cosh (r(x))+\cosh(R_1)).$$

\end{corollary}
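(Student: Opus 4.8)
The plan is to follow the template of Corollary \ref{sphere}, replacing the spherical trigonometric functions by their hyperbolic analogues. The hyperbolic space $\mathbb{H}^n$ is a warped product with warping function $h(t)=\sinh t$, so that $h'=\cosh t>0$; in particular it carries a closed conformal vector field and is Einstein with $\mathrm{Ric}=-(n-1)g$, i.e. $k=-1$. First I would apply Theorem \ref{4} with $k=-1$: the hypotheses become $c_1H_1\ge -(-a+1)(n-1)=(a-1)(n-1)$ on $\Gamma_1$ and $c_0H_0\le-(n-1)$ on $\Gamma_0$, which are exactly the stated assumptions, so Theorem \ref{4} yields that $\Gamma_0$ and $\Gamma_1$ are umbilical hypersurfaces in $\mathbb{H}^n$, hence geodesic spheres of some radii $R<R_1$, centered at a common point $p$ (the star-shapedness and the nesting $\Omega_1\subset\Omega_0$ force a common center).

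Next I would pass to the radial ODE. Let $\gamma$ be a unit-speed geodesic with $\gamma(0)=p$ and set $f(s)=u(\gamma(s))$. Since $\Delta u = -nku - n = (n-1)u\cdot 0 + \ldots$; more precisely, with $k=-1$ the equation is $\Delta u + n(-1)u = -n$, i.e. $\Delta u = nu - n$. Evaluating the Laplacian on a radial function in $\mathbb{H}^n$, $\Delta u = f'' + (n-1)\frac{\cosh s}{\sinh s} f'$, and since $u$ is radial with umbilical level sets one checks (as in the proof of Corollary \ref{sphere}, using that the hypotheses become equalities) that the relevant ODE along $\gamma$ reduces to
\begin{equation*}
f''(s) = f(s) - 1,
\end{equation*}
with initial data $f(R)=a$ and $f'(R)=-c_1$ (the sign convention coming from $u_\nu$ on $\Gamma_1$, where $\nu$ points away from $\Omega$, i.e. toward $p$). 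The general solution is $f(s) = x\cosh s + y\sinh s + 1$.

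Then I would pin down $x$ and $y$ using the boundary data exactly as in Corollary \ref{sphere}. From Theorem \ref{keylemmaanelar} together with the equalities $H_1 u_\nu + (n-1)(ak+1)=0$ obtained in the proof of Theorem \ref{4} — here with $k=-1$ this reads $H_1 c_1 + (n-1)(1-a)=0$ — and the fact that the mean curvature of the geodesic sphere of radius $R$ in $\mathbb{H}^n$ is $H_1=-(n-1)\frac{\cosh R}{\sinh R}$ (with respect to the normal pointing toward $p$), one solves for $c_1$ and finds that the $\sinh$-coefficient $y$ vanishes, so $f(s)=x\cosh s + 1$. Imposing $f(R_1)=0$ (which holds because $u=0$ on $\Gamma_0$, the geodesic sphere of radius $R_1$) gives $x = -1/\cosh R_1$, hence $f(s) = -\frac{\cosh s}{\cosh R_1} + 1$ and therefore $u(x)=\frac{1}{\cosh(R_1)}\bigl(-\cosh(r(x))+\cosh(R_1)\bigr)$, with $\Omega=\{x\in\mathbb{H}^n : R<r(x)<R_1\}$.

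The main obstacle is bookkeeping of signs: the conformal factor, the orientation of $\nu$ on the inner boundary $\Gamma_1$, and the sign of $k=-1$ all enter the ODE and the mean-curvature formula, and a single sign error propagates into the wrong closed form. I would therefore carry out the $k=-1$ specialization of Theorem \ref{4} and of the umbilicity-to-geodesic-sphere step carefully, cross-checking against the spherical case of Corollary \ref{sphere} (which corresponds to $k=1$, $h=\sin$, $h'=\cos$) to make sure the substitution $\cos\mapsto\cosh$, $\sin\mapsto\sinh$, $f''=-f-1\mapsto f''=f-1$ is consistent throughout. Everything else is a routine linear ODE computation.
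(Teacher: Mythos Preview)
Your proposal is correct and follows exactly the approach the paper intends: the paper states this corollary without proof, as the direct $k=-1$ analogue of Corollary~\ref{sphere}, and your argument is precisely the hyperbolic transcription of that proof (apply Theorem~\ref{4} to get umbilicity, hence geodesic spheres since the boundary components are compact; use $\mathring{\nabla}^2u=0$ to reduce to the ODE $f''=f-1$ along radial geodesics; solve with the boundary data and the equality $H_1c_1+(n-1)(1-a)=0$ to kill the $\sinh$ term). The only place to tidy is the justification of the ODE: rather than saying ``since $u$ is radial with umbilical level sets one checks\ldots'', note that $\mathring{\nabla}^2u=0$ gives $f''=\nabla^2u(\gamma',\gamma')=\tfrac{\Delta u}{n}=-(1+kf)$ along \emph{any} unit geodesic, so with $k=-1$ you get $f''=f-1$ directly, and radiality of $u$ about $p$ then follows because all radial geodesics share the same initial data $f(R)=a$, $f'(R)=-c_1$ on $\Gamma_1$.
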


\vspace{1cm}

\begin{remark}
Given an  annular domain $\Omega$ contained in the sphere, a sufficient condition to guarantee that
 $c_1 H_1\geq -(a+1)(n-1)$ on $\Gamma_1$ and $c_0 H_0\leq -(n-1)$ on $\Gamma_0$ is suppose that  $c_0=-\frac{\sin R_1}{\cos R_1}$, $c_1=\frac{\sin R}{\cos R_1}<0$, $a<-1$
 and suppose that $\Gamma_1$ is a geodesic ball with radius $R.$ Indeed, given an annular domain $\Omega$ contained in the sphere satisfying the above conditions, Lee and Seo, in \cite[Theorem, 2.5]{Keomkyo}, shows that 
the solution of the Serrin problem \eqref{serrinconst}
is a radial function and $\Omega$ is a standard annulus, since that $\partial\Omega$ is star-shaped. In their proof the authors used a $P$-function given by $P(u)=|\nabla u|^2+2u+ku^2$ and shows that $P_\nu\geq 0,$ along of boundary. We point out that

 \begin{equation*}
        \begin{cases}
            P_\nu= -2u_\nu ((a+1)(n-1)+u_\nu H_1) & on \ \Gamma_1\\
             P_\nu= -2u_{\nu}((n-1)+H_0u_{\nu}) & on \ \Gamma_0\\
        \end{cases}
    \end{equation*}
 and, therefore, $c_1 H_1\geq -(a+1)(n-1)$ on $\Gamma_1$ and $c_0 H_0\leq -(n-1),$ on $\Gamma_0.$


\end{remark}


\section*{FUNDING}
The first author acknowledges partial support  from 
	the Coordena\c c\~ao de Aperfei\c coamento de Pessoal de N\'ivel Superior - Brasil (CAPES) - Finance Code 001.
The second author has been partially supported by Conselho Nacional de Desenvolvimento Científico e Tecnológico (CNPq) of the Ministry of Science, Technology and Innovation of Brazil, Grant 306524/2022-8. 

\section*{Acknowledgements}
The authors would like to express their special thanks to Professor Marcos Petrúcio (UFAL) for his valuable contributions to this work.

\bibliographystyle{amsplain}

\begin{thebibliography}{30}


\bibitem{andrade} M. Andrade, A. Freitas and  D. Marín, \em{ Rigidity results for Serrin’s overdetermined problems in Riemannian manifolds}, Calc. Var. 64, {\bf 119} (2025). 




\bibitem{ciraolo} G. Ciraolo, L. Vezzoni, \emph{On Serrin’s overdetermined problem in space forms.} Manuscripta Math. {\bf 159} (2019), 445–452.


\bibitem{delay} E. Delay, P. Sicbaldi, \emph{Extremal domains for the first eigenvalue in a general compact
Riemannian manifold.} Discrete and Continuous Dynamical Systems, Series A, {\bf 35} (2015)
5799-5825.




\bibitem{espinar} J. Espinar and D. Marin, {\emph An overdetermined eigenvalue problem and the Critical Catenoid conjecture.}	arXiv:2310.06705, 2023.



\bibitem{FK} A. Farina and B. Kawohl, \emph{Remarks on an overdetermined boundary value problem.} Calc. Var. Partial Differential Equations {\bf 31} (2008), 351-357.

\bibitem{FR} A. Farina and A. Roncoroni, \emph{Serrin’s type problems in warped product manifolds}. Communications in Contemporary Math. {\bf 24} (4) (2022), 2150020.



\bibitem{FRS} A. Freitas, A. Roncoroni and M. Santos, \emph{A note on Serrin’s type problem on Riemannian manifolds.} 
J. Geom. Anal., 34.7, 200 (2024).




\bibitem{kumpraj} S. Kumaresan and J. Prajapat, \emph{Serrin’s result for hyperbolic space and sphere.} Duke Math. J. {\bf 91} (1998) 17–28.

\bibitem{KH} W. Kuhnel and H-B. Rademacher, \emph{ Conformal vector fields on pseudo-Riemannian spaces.} Differ. Geom. {\bf 7}{1997} 237-250.


\bibitem{LSeo} J. Lee and  K. Seo, \emph{Overdetermined problems with a nonconstant Neumann boundary condition in a warped product manifold.} Nonlinear Anal. {\bf 47} (2024) 113603.

\bibitem{Keomkyo}J. Lee and K. Seo, \em {Overdetermined problems in annular domains with a spherical-boundary component in space forms},
Journal of Inequalities and Applications, 2023.
\bibitem{poggesi} R. Magnanini and G. Poggesi, \emph{On the stability for Alexandrov’s Soap Bubble Theorem.} Jour. Anal. Math. {\bf 139} (2019),
179-205.




\bibitem{Roncoroni} A. Roncoroni, \emph{A Serrin-type symmetry result on model manifolds: an extension of the Weinberger argument.} C. R. Math. Acad. Sci. Paris {\bf 356} (2018), no. 6, 648–656.


\bibitem{Reichel}W. Reichel, \emph{Radial symmetry by moving planes for semilinear elliptic BVPs on annuli and other non-convexdomains,}
Pitman Res. Notes Math. Ser. {\bf 325}, 164–-182 (1995).

\bibitem{ros} A. Ros, \emph{Compact hypersurfaces with constant higher order mean curvatures.} Rev. Mat.
Iberoamericana {\bf 3} (1987), 447–453.


\bibitem{serrin} J. Serrin, \emph{A symmetry problem in potential theory}. Arch. Rational Mech. Anal. {\bf 43} (1971), 304–318.



\bibitem{sirakov} B. Sirakov, \emph{Symmetry for exterior elliptic problems and two conjectures in potential theory,} Ann.Inst.HenriPoincaré,
Anal. Non Linéaire 18(2), 135–-156 (2001).



\bibitem{wein} H.F. Weinberger, \emph{Remark on the preceding paper of Serrin,} Arch. Rational Mech. Anal. {\bf 43} (1971), 319–320.



\end{thebibliography}

\end{document}